 \theoremstyle{plain}
 \newtheorem{thm}{Theorem}[section]
 \numberwithin{equation}{section} %% Comment out for sequentially-numbered
 \numberwithin{figure}{section} %% Comment out for sequentially-numbered
 \theoremstyle{plain}
 \theoremstyle{remark}
 \newtheorem{rem}[thm]{Remark}
 \theoremstyle{plain}
 \newtheorem{prop}[thm]{Proposition} %%Delete [thm] to re-start numbering
 \theoremstyle{plain}
 \newtheorem{cor}[thm]{Corollary} %%Delete [thm] to re-start numbering
 \theoremstyle{plain}
 \newtheorem{lem}[thm]{Lemma} %%Delete [thm] to re-start numbering
\newcommand{\1}{\mathbf 1}
\renewcommand{\hat}{\widehat}
\theoremstyle{remark}
 \newtheorem*{conclusion*}{Conclusion}
\newtheorem*{remark*}{Remark}
\def\R{{\mathbb R}}
 \def\N{{\mathbb N}}
\begin{document}

\title{Fractal analysis for sets of non-differentiability of Minkowski's
question mark function}

\date{\today{}}

\author{Marc Kesseb\"{o}hmer and Bernd O. Stratmann}

\address{Fachbereich 3 - Mathematik und Informatik, Universit\"{a}t Bremen,
D--28359 Bremen, Germany}

\email{mhk@math.uni-bremen.de}

\address{Mathematical Institute, University of St Andrews, St Andrews KY16
9SS, Scotland}
\subjclass[2000]{26A30,  10K50}
\keywords{Minkowski question mark function, singular functions, Stern-Brocot spectrum, Farey map}
\email{bos@maths.st-and.ac.uk}

\begin{abstract}
In this paper we study various fractal geometric aspects of the Minkowski
question mark function $Q.$ We show that  the unit interval can be written as
the union of the three sets $\Lambda_{0}:=\{ x:Q'(x)=0\}$, 
$\Lambda_{\infty}:=\{ x:Q'(x)=\infty\}$, and $\Lambda_{\sim}:=\{ x:Q'(x)\textrm{
does not exist and } Q'(x)\neq\infty\}.$  The main result is that the Hausdorff
dimensions of these sets are related in the following way. \[
\dim_{H}(\nu_{F})<\dim_{H}(\Lambda_{\sim})= \dim_{H} \left( \Lambda_{\infty}
\right) = \dim_{H} \left(\mathcal{L}(h_{\mathrm{top}})\right)<\dim_{H}\left(\Lambda_{0}\right)=1.\]
 Here, $\mathcal{L}(h_{\mathrm{top}})$ refers to the level set of the
 Stern-Brocot multifractal decomposition at the topological entropy
 $h_{\mathrm{top}}=\log2$ of the Farey map $F,$ and $\dim_{H}(\nu_{F})$ denotes
 the Hausdorff dimension of the measure of maximal entropy of the dynamical system associated with $F.$ The proofs rely
partially on the multifractal formalism for Stern-Brocot intervals and give
non-trivial applications of this formalism.

\end{abstract}
\maketitle
\let\languagename\relax

\section{Introduction and statement of results}

In this paper we return to the origins of the multifractal analysis
of measures, which started with work on fractal sets by Mandelbrot
and others in the 1980s (see e.g. \cite{GM}, \cite{Halsey}, 
\cite{Mandelbrot}, \cite{Frisch}).
For this, we go even further back in time, and consider a function
$Q$ of the unit interval $\mathcal{U}$ into itself, which was originally
designed by Minkowski \cite{Min:04} in order to illustrate the Lagrange
property of quadratic surds. Today, this function is usually referred
to as the Minkowski question mark function, and it appears in various
different disguises. For instance, it appears as the distribution
function of the measure of maximal entropy $\nu_{F}$ for the dynamical
system arising from the Farey map $F$. That is, \[
Q(x)=\nu_{F}([0,x)),\,\,\,\textrm{for all }\,\, x\in\mathcal{U}.\]
 Since the support of $\nu_{F}$ is equal to $\mathcal{U}$, and since
$\nu_{F}$ is singular with respect to the $1$-dimensional Lebesgue
measure $\lambda$ on $\mathcal{U}$ (see Salem \cite{Salem}), the
graph of $Q$ is appropriately described by the term `slippery devil's
staircase', a term which was coined by Gutzwiller and Mandelbrot in
\cite{GM} (see also \cite{G}, \cite{B}). Another disguise of $Q$ is, that
it provides a stable bridge between the Farey system and the binary
system $(\mathcal{U},T)$, that is the dynamical system which arises
from the tent map $T$. In this disguise, the homeomorphism
$Q$ represents the topological conjugacy map between the Farey system
and the tent system, such that $T\circ Q=Q\circ F$. Using elementary
observations for the regular continued fraction expansion $x=[a_{1},a_{2},\ldots]$
of elements $x\in\mathcal{U}$, one readily rediscovers the following
alternating sum representation of $Q$, first obtained by Denjoy \cite{Denjoy0}
(see also \cite{Denjoy} , \cite{Salem}, \cite{R1}, \cite{R2}),
\[
Q(x):=-2\sum_{k\in\N}(-1)^{k}\,\,2^{-\sum_{i=1}^{k}a_{i}},\,\,
\textrm{for all }\,\, x=[a_{1},a_{2},\ldots]\in\mathcal{U}.\]
 These observations mark the starting point for the fractal geometric
analysis of the function $Q$ in this paper. We will show that interesting
measure theoretical aspects of the Minkowski scenario can be derived
from the recently obtained multifractal analysis for Stern-Brocot
intervals \cite{KesseboehmerStratmann:07}. As a first demonstration
of the fruitfulness of this approach, we study fractal geometric
relationships
between $Q$, $\nu_{F}$ and the Gauss measure $m_{G}$. We obtain
the result
that one can explicitly compute the integral over $Q$ with respect
to $m_{G}$, as well as the integral with respect to $\nu_{F}$ over
the distribution function $\Delta_{m_{G}}$ of $m_{G}$. That is,
with $\dim_{H}$ referring to the Hausdorff dimension, we obtain 
\[
\int_{\mathcal{U}}Q\, dm_{G}=1-\int_{\mathcal{U}}\Delta_{m_{G}}\, d\nu_{F}
=(\dim_{H}(\nu_{F})-1/2)/\dim_{H}(\nu_{F})\,\,\,(\approx0.571612).\]
 As an immediate consequence of this, one can then also rediscover
a result by Kinney \cite{Kinney} which expresses the Hausdorff dimension
of $\nu_{F}$ in terms of a certain explicit integral. \\
 Subsequently, we draw the attention to the derivative $Q'$ of $Q$.
It was shown only relatively recently in \cite{PVB:01} that if $Q'(x)$
exists in the generalised sense, meaning that $Q'(x)$ either exists
or is equal to infinity, then $Q'(x)$ either vanishes or else is
equal to infinity. We give a new and very elementary proof of this
fact, and then add to this by showing that $Q'(x)$ is equal to infinity
if and only if $\lim_{n\rightarrow\infty}\nu_{F}(T_{n}(x))/\lambda(T_{n}(x))$
is equal to infinity. Here, $T_{n}(x)$ refers to the unique atom
of the $n$-th refinement of $\mathcal{U}$ with respect to $F$,
such that $x\in T_{n}(x)$. Moreover, we show that 
if  for the approximants $p_{k}/q_{k}$ of $x =[a_{1},a_{2},\ldots]$ 
we have $\lim_{k\rightarrow\infty} a_{k+1} \cdot  \nu_{F} 
\left([p_{k}/q_{k},p_{k+1}/
q_{k+1})_{\pm}\right) \, / \, \lambda\left([p_{k}/q_{k},p_{k+1}/
q_{k+1})_{\pm}\right)=0$, then
$Q'(x)$ vanishes (see Section 5 for the definition of $[\, \,  , \, \,
)_{\pm}$). The latter, slightly technical observations
will turn out to be crucial in the multifractal analysis of $Q'$
to come. In order to state the main results of this analysis, note
that $\mathcal{U}$ can be decomposed into mutually disjoint sets
as follows. \[
\mathcal{U}=\Lambda_{0}\cup\Lambda_{\infty}\cup\Lambda_{\sim},\]
 where $\Lambda_{0}:=\{ x:Q'(x)=0\}$, $\Lambda_{\infty}:=\{ x:Q'(x)=\infty\}$,
and $\Lambda_{\sim}$ refers to the set of elements for which $Q'$ does
not exist in the generalised sense. Surprisingly, before these 
investigations relatively  little was known about this decomposition. The main contributions
thus far were made by Salem, and these date back more than 60 years.
In our notation, the aforementioned result of Salem \cite{Salem}
reads as $\lambda(\Lambda_{0})=1$. More precisely, Salem \cite{Salem}
showed that if $Q'([a_{1},a_{2},\ldots])$ exists and is equal to
some finite value, and if, additionally, $\limsup_{n\rightarrow\infty}a_{n}=\infty$,
then $[a_{1},a_{2},\ldots]\in\Lambda_{0}$. The analysis in this paper
will give significant extensions of this classical result. In order
to state these extensions, recall that in \cite{KesseboehmerStratmann:07} we
computed the dimension spectrum of the multifractal decomposition
\[
\mathcal{L}(s):=\left\{ x\in\mathcal{U}:\lim_{n\rightarrow
\infty}\frac{\log\lambda(T_{n}(x))}{\log\nu_{F}(T_{n}(x))}=
\frac{s}{h_{\mathrm{top}}}\right\} .\]
Here, $h_{\mathrm{top}}=\log2$ refers to the topological entropy
of the Farey map $F$. In particular, in \cite{KesseboehmerStratmann:07}
it was shown that the Hausdorff dimension of $\mathcal{L}(s)$ is
nontrivial if and only if $s\in[0,2\log\gamma)$ (with $\gamma$ referring
to the Golden Mean). By relating this multifractal decomposition to
the Minkowski scenario in this paper, a first outcome is that \[
\mathcal{L}(s)\subset\Lambda_{\infty}
\textrm{\, for }s\in(h_{\mathrm{top}},2\log\gamma],\,
\textrm{whereas } \, \mathcal{L}(s)\subset\Lambda_{0}\: \, \textrm{for } \, s\in[0,h_{\mathrm{top}}).\]
 By expressing this result in terms of the convergents $p_{k}/q_{k}$
of elements $x=[a_{1},a_{2},\ldots]$, one then immediately derives
the following result. \[
\left\{ x:\lim_{n\rightarrow\infty}2\log q_{n}/\sum_{i=1}^{n}a_{i}>
h_{\mathrm{top}}\right\} \subset\Lambda_{\infty},\textrm{\, and }
\,\,\left\{x: \lim_{n\rightarrow\infty}2\log q_{n}/
\sum_{i=1}^{n}a_{i}<h_{\mathrm{top}}\right\} \subset\Lambda_{0}.\]
 Let us now finally come to the main result of this paper. For this,
note that on the basis of the results of Denjoy and Salem, one might suspect
that the complement of $\Lambda_{0}$ in $\mathcal{U}$ can still
be large, in the sense that its Hausdorff dimension could be
equal to one. Our main result now shows that this is in fact not the
case. More precisely, for the Hausdorff dimensions of $\Lambda_{\infty}$
and $\Lambda_{\sim}$, we obtain the result \[
0.875 \approx \dim_{H}(\nu_{F})<\dim_{H}(\Lambda_{\sim})=
\dim_{H}\left(\Lambda_{\infty}\right)=\dim_{H}
\left(\mathcal{L}(h_{\mathrm{top}})\right)<\dim_{H}\left(\Lambda_{0}\right)=1.\]
\begin{figure}
\psfrag{dimH}{\( \dim_H(\mathcal{L}(s))\)} \psfrag{dimH1}{ \(\dim_H(\Lambda_\infty)=
\dim_H(\Lambda_\sim)\)}\psfrag{dimH2}{ \(\dim_H(\nu_F)\approx 0.875 \)}
\psfrag{log2}{   \(\log 2\)}\psfrag{logG}{   \(2\log \gamma \)}
\psfrag{chi}{\(\chi_{\nu_F}\)}\psfrag{a}{\(s\)}\psfrag{1}{\(1\)}
\includegraphics[%
  width=0.70\columnwidth,
  keepaspectratio]{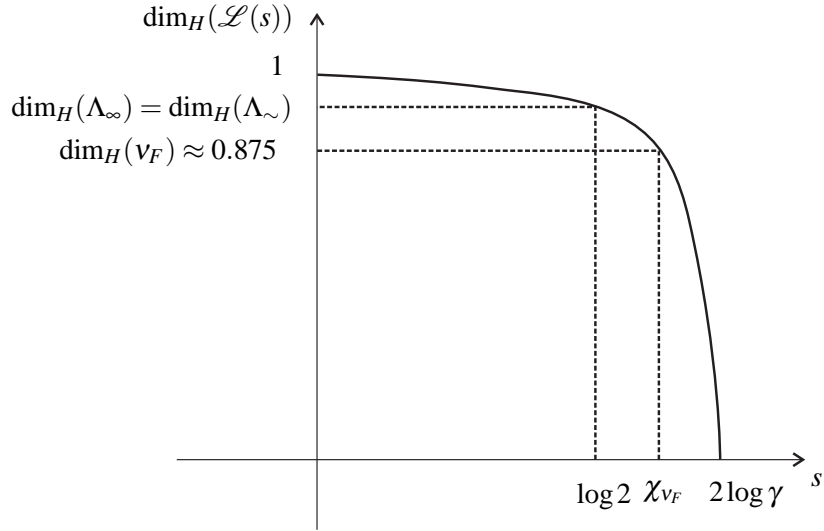}

\caption{\label{fig:Spectrum}The Stern-Brocot dimension spectrum}
\end{figure}
\noindent Here, the proof of the second equality $\dim_{H}\left(\Lambda_{\infty}\right)=\dim_{H}
\left(\mathcal{L}(h_{\mathrm{top}})\right)$ is derived from a 
non-trivial application of the multifractal formalism for Stern-Brocot 
intervals obtained in \cite{KesseboehmerStratmann:07}  (cf. Figure \ref{fig:Spectrum}), whereas the 
proof of the first 
equality $\dim_{H}(\Lambda_{\sim})=
\dim_{H}\left(\Lambda_{\infty}\right)$  combines this formalism with 
an extension of  the analysis of sets of `non-typical' points in 
\cite{BS00} to non-hyperbolic dynamical systems.

\begin{rem} In contrast to `ordinary devil's 
staircases', which usually arise from distribution
functions of fractal measures on Cantor-like sets, a  {\em slippery devil's staircase}
is the graph of the distribution function of a measure whose support 
is equal to the whole unit interval $\mathcal{U},$ but which is
nevertheless  singular with 
respect to the Lebesgue measure
$\lambda$ on $\mathcal{U}$.
Slippery devil's staircases should not be confused with
ordinary devil's staircases. In order to give
a brief demonstration of the difference between these two types of staircases,
let us consider the example of the homogeneous Cantor measure $\mu_{\mathcal{C}}$ 
supported on Cantor's ternary set $\mathcal{C}$.
It is immediately clear that the derivative of the  distribution
function $\Delta_{\mu_{\mathcal{C}}}$ 
vanishes on the complement of $\mathcal{C}$ in $\mathcal{U}$,
giving that $\lambda(\Lambda_{0}(\Delta_{\mu_{\mathcal{C}}}))=1$. 
 By a result of Darst \cite{Darst} (see also \cite{F}),
one has $\dim_{H}(\Lambda_{\sim}(\Delta_{\mu_{\mathcal{C}}}))=(\dim_{H}(\mathcal{C}))^{2}$.
Moreover, by a classical result of Gilman \cite{Gilman} we have that if the
derivative of $\Delta_{\mu_{\mathcal{C}}}$ exists in the generalised
sense at some point  $x\in \mathcal{C}$, then it can only be equal to   infinity.
Hence, $\dim_{H}(\Lambda_{\infty}(\Delta_{\mu_{\mathcal{C}}}))=
\dim_{H}(\mathcal{C})$.
Let us remark that the  result of Darst can be derived from straightforward
adaptations of techniques developed for estimating the Hausdorff dimension
of well-approximable irrational numbers (see e.g. \cite{Jarnik},
\cite{S}). Hence, in this situation, the set $\Lambda_{\sim}$
can be thought of as being conceptionally analogous to the set of
well-approximable numbers. This analogy no longer
holds for slippery devil's staircases.
\end{rem}

\section{Multifractal formalism for Stern-Brocot intervals revisited}

Let us first recall the classical construction of Stern-Brocot intervals
in the unit interval $\mathcal{U}:=[0,1]$ (\cite{Stern1858}, \cite{Brocot:1860},
see also \cite{HW}, \cite{Ito}, \cite{Richards}). For each
$n\in\N_{0}$, the elements of the Stern-Brocot sequence $\left\{ s_{n,k}/t_{n,k}:
k=0,\ldots,2^{n}\right\} $
of order $n$ are defined recursively for $n\in\N$,
$k=0,\ldots,2^{n-1}$
and $r=s,t$ as follows \[
s_{0,0}:=0,s_{0,1}:=t_{0,0}:=t_{0,1}:=1,r_{n,2k}:=r_{n-1,k}\,\,\textrm{and }
\,\, r_{n,2k-1}:=r_{n-1,k-1}+r_{n-1,k}.\]
 With this ordering of the rationals in $\mathcal{U}$ we define the
set $\mathcal{T}_{n}$ of Stern-Brocot intervals of order $n$ by
\[
\mathcal{T}_{n}:=\left\{ T_{n,k}:=\left[s_{n,k}/t_{n,k},s_{n,k+1}/t_{n,k+1}\right):
\, k=0,\ldots,2^{n}-1\right\} .\]
 Clearly, $\mathcal{T}_{n}$ is the set of atoms of the $n$-th
 refinement
of $\mathcal{U}$ with respect to the Farey map, and one immediately
finds that for each $x\in\mathcal{U}$ and $n\in\N_{0}$ there exists
a unique Stern-Brocot interval $T_{n}(x)\in\mathcal{T}_{n}$ such
that $x\in T_{n}(x)$. \\
 In \cite{KesseboehmerStratmann:07} (see also \cite{KesseboehmerStratmann:04A}
\cite{KesseboehmerStratmann:04B}), we considered the $n$-th Stern-Brocot
quotient $\ell_{n}$ and the Stern-Brocot growth rate $\ell$, which
are given by (assuming that the limit exists) \[
\ell_{n}(x):=\frac{1}{n}\,\,\log\left(1/\lambda\left(T_{n}(x)\right)\right)\,\,
\textrm{and }\,\,\ell(x):=\lim_{n\rightarrow\infty}\ell_{n}(x).\]
 Here, $\lambda$ refers to the $1$--dimensional Lebesgue measure
on $\mathcal{U}$.\\
 One of the main results in \cite{KesseboehmerStratmann:07} determined
the Lyapunov spectrum arising from $\ell$. That is, we computed the
Hausdorff $\dim_{H}$ of the following level sets \[
\mathcal{L}(s):=\left\{ x\in\mathcal{U}:\ell(x)=s\right\} ,
\textrm{ for }s\in\R.\]
 For the purposes of this paper the following main results
 of \cite{KesseboehmerStratmann:07}
will be crucial. Here, $P$ refers to the Stern-Brocot pressure function
$P$, which is given for $t\in\R$ by \begin{equation}\label{pressure}
P(t):=\lim_{n\rightarrow\infty}\frac{1}{n}\log\sum_{T\in\mathcal{T}_{n}}
\left(\lambda\left(T\right)\right)^{t},\end{equation}
 and $\widehat{P}$ refers to the Legendre transform, given for $s\in\R$
by $\widehat{P}(s):=\sup_{t\in\R}\{ st-P(t)\}$. Also, throughout
we let $\gamma:=(\sqrt{5}+1)/2$, and use the convention $\widehat{P}(0)/0:=-1$.

\vspace{2mm}

\begin{thm}[\cite{KesseboehmerStratmann:07}]
\label{KS} For each $s\in\left[0,2\log\gamma\right]$,
we have \[
\dim_{H}\left(\mathcal{L}(s)\right)=-\frac{\widehat{P}(-s)}{s}\,\left(=:d(s)\right).\]
 Here, the function $P$ has the following properties.
\begin{itemize}
\item $P$ is convex, non-increasing and differentiable throughout $\R$.
\item $P$ is real-analytic on the interval $(-\infty,1)$ and is equal
to $0$ on $[1,\infty)$.
\end{itemize}
Also, for the dimension function $d$ the following hold.
\begin{itemize}
\item $d$ is continuous and strictly decreasing on $[0,2\log\gamma]$,
and vanishes on $\R\setminus[0,2\log\gamma)$.
\item $d(0):=\lim_{t\searrow0}-\widehat{P}(-t)/t=1$, and $\lim_{t\nearrow2\log\gamma}d'\left(t\right)
=-\infty$.
\end{itemize}
\end{thm}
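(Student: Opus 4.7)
The main obstacle is that the Farey map $F$ has an indifferent fixed point at $0$, so it is not uniformly expanding, and the classical thermodynamic formalism does not apply directly. My plan is to induce on the set $\{x : a_{1}(x)=1\}$, where the first-return map under $F$ is conjugate to the Gauss map $G$, which is piecewise uniformly expanding with countably many full Markov branches. Every Stern-Brocot interval $T_{n,k}$ corresponds, via the continued fraction expansion, to a cylinder $[a_{1},\ldots,a_{k}]$ with $a_{1}+\cdots+a_{k}=n+1$, and the Lebesgue measure of this cylinder is comparable to $1/(q_{k}(q_{k}+q_{k-1}))$ by standard diophantine estimates. This combinatorial dictionary between the Stern-Brocot filtration and the Gauss cylinders is what translates every question about $P$ and $\mathcal{L}(s)$ into a question about the Gauss system.

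Next I would establish the properties of $P$. Convexity follows from Hölder's inequality applied to the defining sum in \eqref{pressure}; monotonicity follows from $\lambda(T)\leq 1$. For analyticity on $(-\infty,1)$, I would use Abramov-type inducing: the Farey pressure $P(t)$ is implicitly characterised by
\[
P_{G}\bigl(-t\log|G'|-P(t)\,\tau\bigr)=0,
\]
where $\tau$ is the return time and $P_{G}$ is the Gauss pressure. Since the transfer operator of $G$ depends analytically on its potential in a suitable Banach space (Ruelle-Mayer), the implicit function theorem gives real-analyticity of $P$ on the region where the above pressure is well defined, namely $(-\infty,1)$. The equality $P(t)=0$ for $t\geq 1$ is the expected phase transition: the series induced by $\tau$ diverges at $t=1$ because of the indifferent fixed point contribution $\lambda(T_{n,0})\sim 1/n$. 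One-sided differentiability at $t=1$ follows by showing that the left derivative $P'(1^{-})=0$, via asymptotic analysis of the Gauss pressure equation as $t\nearrow 1$.

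For the dimension formula, I would apply the multifractal formalism. For each $s$ in the open interval $(0,2\log\gamma)$, strict convexity of $P$ on $(-\infty,1)$ gives a unique $t(s)\in(-\infty,1)$ with $P'(t(s))=-s$. Lift the Gauss equilibrium state for the potential $-t(s)\log|G'|-P(t(s))\tau$ to an $F$-invariant measure $\mu_{t(s)}$. For the lower bound, a Birkhoff-ergodic argument shows that $\mu_{t(s)}$-a.e.\ point lies in $\mathcal{L}(s)$ and has pointwise dimension $-\widehat{P}(-s)/s$, and the mass distribution principle yields $\dim_{H}\mathcal{L}(s)\geq d(s)$. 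The upper bound is obtained by the natural Stern-Brocot covers: for $x\in\mathcal{L}(s)$, the intervals $T_{n}(x)$ satisfy $\lambda(T_{n}(x))\approx \e^{-ns}$, and summing $\lambda(T_{n}(x))^{d(s)+\varepsilon}$ over the cover reduces, via the definition of $P$, to a convergent series whenever $d(s)+\varepsilon>-\widehat{P}(-s)/s$.

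Finally, for the properties of $d$, strict monotonicity and continuity on $[0,2\log\gamma]$ are immediate from strict convexity and analyticity of $P$ on $(-\infty,1)$ together with the standard properties of the Legendre transform. The boundary value $d(0)=1$ follows from $t(s)\to 1^{-}$ as $s\to 0^{+}$, combined with $P(t(s))\to 0$ and $P'(1^{-})=0$, so that $\widehat{P}(-s)=-s\,t(s)-P(t(s))\sim -s$. The vertical tangent $\lim_{s\nearrow 2\log\gamma}d'(s)=-\infty$ reflects the degenerate limit $t(s)\to-\infty$: the level set at the maximum Lyapunov exponent collapses to a single point (the Golden Mean orbit), and the standard computation of $d'(s)=(-t(s)+P(t(s))/s)/s$ blows up. The main technical delicacies are concentrated at the phase-transition endpoint $s=0$, where the induced system fails to be Gibbs uniformly and the asymptotic $P(t)\to 0$ must be controlled with care.
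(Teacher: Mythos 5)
The theorem you are attempting is stated in the paper with the attribution $\text{[KesseboehmerStratmann:07]}$ and is not proved there at all; the present paper only quotes it, so there is no internal proof to compare your sketch against. That said, your outline is essentially the standard inducing-plus-thermodynamic-formalism route and is, as far as I can tell, what the cited reference carries out: pass from the Farey map to the Gauss first-return system, characterise the Stern-Brocot pressure through an Abramov-type implicit equation $P_{G}(-t\log|G'|-P(t)\tau)=0$, obtain analyticity on $(-\infty,1)$ from analytic dependence of the Ruelle--Mayer transfer operator on the potential, identify the phase transition at $t=1$ from the polynomial decay $\lambda(T_{n,0})\sim 1/n$ produced by the indifferent fixed point, and run the usual multifractal machinery (Gibbs/equilibrium states for the lower bound, Stern-Brocot covers for the upper). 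You also correctly flag the endpoint $s=0$ as the technically delicate place where uniform Gibbs estimates fail.

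Two small things to repair. First, your derivative formula for $d$ is off: from $d(s)=t(s)+P(t(s))/s$ with $P'(t(s))=-s$ one gets, after the cancellation of the $t'(s)$ terms,
\[
d'(s)=-\frac{P(t(s))}{s^{2}},
\]
and since $t(s)\to-\infty$ forces $P(t(s))\to+\infty$ as $s\nearrow 2\log\gamma$, the vertical tangent still follows; your displayed formula, however, is not what the calculation gives. Second, $\mathcal{L}(2\log\gamma)$ is not a single point (it contains, for example, all points whose continued fraction entries are eventually $1$), although it does have Hausdorff dimension $0$, which is all the argument actually needs. Finally, the dictionary between $\mathcal{T}_{n}$ and Gauss cylinders should be stated with more care: a Stern-Brocot interval $T_{n}(x)$ is a genuine Gauss cylinder $[a_{1},\dots,a_{k}]$ only when $n=a_{1}+\dots+a_{k}$; otherwise it is an intermediate refinement inside a Gauss step. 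This does not affect the pressure computation (the sums are comparable up to the standard distortion constants) but is worth keeping straight when transferring covers between the two systems.
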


\section{Minkowski's question mark function}

In this section we will investigate the relationships between the following
two well known, elementary, measure theoretical dynamical systems.

\emph{The Farey-system} $\left(\mathcal{U},F,\nu_{F}\right)$: Let
$F:\mathcal{U}\rightarrow\mathcal{U}$ refer to the \emph{Farey-map}
on $\mathcal{U}$, given by \[
F(x):=\left\{ \begin{array}{lll}
x/(1-x) & \,\,\mbox{ for } & 0\leq x\leq1/2,\\
(1-x)/x & \,\,\mbox{ for } & 1/2\leq x\leq1.\end{array}\right.\]
 One immediately verifies that the inverse branches of $F$ are given
by $f_{1}(x)=x/(x+1)$ and $f_{2}(x)=1/(x+1)$. Also, let $\nu_{F}$
refer to the measure of maximal entropy of the system $(\mathcal{U},F)$.
That is, in particular, we have that $\nu_{F}(T_{n,k})=2^{-n}$, for
all $n\in\N_{0}$ and $k=0,\ldots,2^{n}-1$. Finally, note that $\nu_{F}$
is an $F$-invariant Gibbs measure for the  potential function equal to some constant.

\emph{The tent-system} $(\mathcal{U},T,\nu_{T})$: Let $T:\left[0,1\right]\rightarrow\left[0,1\right]$
refer to the \emph{tent map} on $\mathcal{U}$, given by \[
T\left(x\right):=\left\{ \begin{array}{lll}
2x & \,\,\mbox{ for } & 0\leq x\leq1/2,\\
2-2x & \,\,\mbox{ for } & 1/2<x\leq1.\end{array}\right.\]
 The measure of maximal entropy of the system $(\mathcal{U},T)$ will
be denoted by $\nu_{T}$, and we clearly have that $\nu_{T}=\lambda$.

\vspace{2mm} The following proposition shows that $\left(\mathcal{U},T\right)$
and $\left(\mathcal{U},F\right)$ are in fact topologically conjugate,
and that the conjugating homeomorphism is given by the distribution
function $\Delta_{\nu_{F}}$ of the Farey measure $\nu_{F}$. Moreover,
we will see that $\Delta_{\nu_{F}}$ is in fact equal to
$Q$.
Recall that Denjoy \cite{Denjoy0} \cite{Denjoy} and Salem \cite{Salem}
showed that $Q$ is given by \begin{equation}
Q(x)=-2\sum_{k\in\N}(-1)^{k}\,\,2^{-\sum_{i=1}^{k}a_{i}},\,\,\textrm{for all }\,\, 
x=[a_{1},a_{2},\ldots]\in\mathcal{U}.\label{alter}\end{equation}
 The following commuting diagram summarises the statement of the proposition.
\[
\begin{CD}\left(\mathcal{U},\nu_{F}\right)@>F>>\left(\mathcal{U},\nu_{F}\right)\\
@V\Delta_{\nu_{F}}=QVV@VV\Delta_{\nu_{F}}=QV\\
\left(\mathcal{U},\nu_{T}\right)@>T>>\left(\mathcal{U},\nu_{T}\right)\end{CD}\]
 Let us also remark that we believe that the proposition is well known
to experts in this area. However, we were unable to locate it
in the literature, and therefore decided to include the proof.

\begin{prop}
\label{lem:The-distribution-function} The two systems $\left(\mathcal{U},T\right)$
and $\left(\mathcal{U},F\right)$ are topologically conjugate, and
the conjugating homeomorphism is given by the distribution function
$\Delta_{\nu_{F}}$ of the Farey-measure $\nu_{F}$. Moreover, the
function $\Delta_{\nu_{F}}$ coincides with the Minkowski question
mark function $Q$. \end{prop}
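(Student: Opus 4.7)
My plan has three steps. First, I would observe that $\nu_F$, being the measure of maximal entropy of the two-branched expanding map $F$, is non-atomic and has full support, so that $\Delta_{\nu_F}$ is automatically a continuous, strictly increasing bijection of $\mathcal{U}$, hence a homeomorphism. Combining $F$-invariance with the disjoint decomposition $F^{-1}(A)=f_1(A)\cup f_2(A)$ and the defining property $\nu_F(T_{n,k})=2^{-n}$ --- which forces $\nu_F(f_1(A))=\nu_F(f_2(A))=\nu_F(A)/2$ --- one obtains the two functional equations
\[
\Delta_{\nu_F}(f_1(x))=\tfrac{1}{2}\Delta_{\nu_F}(x),\qquad \Delta_{\nu_F}(f_2(x))=1-\tfrac{1}{2}\Delta_{\nu_F}(x),
\]
where the first identity uses that $f_1$ is increasing with $f_1([0,x))=[0,f_1(x))$, and the second uses that $f_2$ is decreasing with $f_2([0,x])=[f_2(x),1]$ together with the non-atomicity of $\nu_F$.

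Second, the topological conjugacy $T\circ\Delta_{\nu_F}=\Delta_{\nu_F}\circ F$ is an immediate consequence of these two functional equations. For $y\in[0,1/2]$ one has $y=f_1(F(y))$, whence $\Delta_{\nu_F}(F(y))=2\Delta_{\nu_F}(y)=T(\Delta_{\nu_F}(y))$; for $y\in[1/2,1]$ one has $y=f_2(F(y))$, whence $\Delta_{\nu_F}(F(y))=2-2\Delta_{\nu_F}(y)=T(\Delta_{\nu_F}(y))$.

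Third, to identify $\Delta_{\nu_F}$ with the Minkowski question mark function, I would exploit the regular continued fraction expansion $x=[a_1,a_2,\ldots]$. A direct check using $f_1^n(y)=y/(1+ny)$ and $f_2(y)=1/(1+y)$ gives $1/(a_1+x')=(f_1^{a_1-1}\circ f_2)(x')$, so iterating the Gauss map produces the factorisation
\[
x=(f_1^{a_1-1}\circ f_2)\circ\cdots\circ(f_1^{a_n-1}\circ f_2)\bigl(x^{(n)}\bigr),\qquad x^{(n)}:=[a_{n+1},a_{n+2},\ldots].
\]
Applying the functional equations from Step 1 block by block yields the recursion $\Delta_{\nu_F}(x^{(k-1)})=2^{1-a_k}-2^{-a_k}\Delta_{\nu_F}(x^{(k)})$, which unfolds to
\[
\Delta_{\nu_F}(x)=\sum_{k=1}^{n}(-1)^{k+1}\,2^{1-S_k}+(-1)^{n}\,2^{-S_n}\,\Delta_{\nu_F}(x^{(n)}),\qquad S_k:=\sum_{i=1}^{k}a_i.
\]
Since $\Delta_{\nu_F}(x^{(n)})\in[0,1]$ and $S_n\to\infty$, the remainder vanishes in the limit, and the resulting alternating series coincides with the Denjoy representation (\ref{alter}) of $Q$. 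The main technical obstacle lies in this third step: one must carefully justify the factorisation of $x$ as the prescribed alternating composition of the inverse branches, verify convergence of the composition, and treat rationals (where the continued fraction terminates and the sum truncates) as a separate case.
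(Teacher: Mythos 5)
Your proposal is correct, but it takes a genuinely different route from the paper. The paper first proves $\Delta_{\nu_F}=Q$ by an induction on convergents, using the mediant structure of the intermediate convergents and the property $\nu_F(T_{n,k})=2^{-n}$ to match the Denjoy alternating series term by term, and only afterwards verifies $T\circ Q=Q\circ F$ by a direct manipulation of that series. You instead isolate the two de Rham--type functional equations
\[
\Delta_{\nu_F}\circ f_1=\tfrac12\,\Delta_{\nu_F},\qquad
\Delta_{\nu_F}\circ f_2=1-\tfrac12\,\Delta_{\nu_F},
\]
as the single engine of the whole proof: the conjugacy is an immediate one-line consequence of $y=f_1(F(y))$ on $[0,1/2]$ and $y=f_2(F(y))$ on $[1/2,1]$, and the Denjoy series drops out by unrolling the equations along the continued-fraction blocks $f_1^{a_k-1}\circ f_2$. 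This is arguably the more conceptual and economical argument: it makes explicit the self-similarity of $\Delta_{\nu_F}$ under the inverse branches, derives both assertions from one source, and reverses the order (conjugacy first, explicit formula second) in a way that avoids the paper's computation with intermediate convergents entirely. The paper's approach has the virtue of staying inside the Stern-Brocot combinatorics used throughout the rest of the article. Two points you flag in passing do deserve a sentence each in a full write-up: the identity $\nu_F(f_i(A))=\tfrac12\nu_F(A)$ for all Borel $A$ should be justified (it holds on all Stern-Brocot cylinders by the maximal-entropy property and extends by a monotone class argument), and for rational $x=[a_1,\dots,a_n]$ the recursion terminates with $x^{(n)}=0$ and $\Delta_{\nu_F}(0)=0$, so the series truncates exactly as in the Denjoy formula; but you correctly anticipated both.
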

\begin{proof}
Let us first show that $\Delta_{\nu_{F}}$ and $Q$ do in fact coincide.
For this, let $x=[a_{1},a_{2},\ldots]\in\mathcal{U}$ be given. Recall
that for the sequence $\left(p_{k}/q_{k}\right)_{k\in\N}$ of convergents
of $x$ (the sequence is finite if $x$ is rational, and infinite
otherwise)  we have that $p_{k}/q_{k}=[a_{1},\ldots,a_{k}]$, and
that $x=\lim_{k\rightarrow\infty}p_{k}/q_{k}$. Clearly, the latter
fact guarantees that it is sufficient to show that $\Delta_{\nu_{F}}(p_{k}/q_{k})=Q(p_{k}/q_{k})$,
for each of the convergents of $x$. For this, we employ the following
straightforward inductive argument. For ease of exposition, let $Q_{k}:=\Delta_{\nu_{F}}(p_{k}/q_{k})$
and $A_{k}:=|Q_{k+1}-Q_{k-1}|$. For the start of the induction, note
that if $a_{1}=1$ then $\Delta_{\nu_{F}}(1/a_{1})=1=Q(1)$. Similarly,
for $a_{1}>1$ we have \[
\Delta_{\nu_{F}}(1/a_{1})=1-\sum_{i=1}^{a_{1}-1}2^{-i}=
1-(1-2^{-(a_{1}-1)})=2\cdot 2^{-a_{1}}
=Q(1/a_{1}).\]
 For the inductive step, let us first state the following relations
(which will be verified in what follows). For each $k\in\N\,(k\neq1)$
we have \begin{equation}
A_{k+1}=(1-2^{-a_{k+1}})\,|Q_{k}-Q_{k-1}|,\,\,\textrm{and }\,\, Q_{k+1}=\left\{ \begin{array}{lll}
Q_{k-1}+A_{k+1} & \,\,\mbox{ for $k$ odd }\\
Q_{k-1}-A_{k+1} & \,\,\mbox{ for $k$ even. }\end{array}\right.\label{Q}\end{equation}
 The inductive assumption then is that $Q_{i}=Q(p_{i}/q_{i})$ holds
for each each $i\in\{1,\ldots,k\}$, for some $k\in\N$. Using this
and (\ref{Q}), it follows for $k$ odd, \begin{eqnarray*}
\Delta_{\nu_{F}}(p_{k+1}/q_{k+1}) & = & Q_{k+1}=Q_{k-1}+A_{k+1}=Q_{k-1}+(1-2^{-a_{k+1}})\,
|Q_{k}-Q_{k-1}|\\
 & = & Q_{k-1}+2\cdot2^{-\sum_{i=1}^{k}a_{i}}(1-2^{-a_{k+1}})=Q_{k-1}+2\cdot2^{-\sum_{i=1}^{k}a_{i}}-2\cdot2^{-\sum_{i=1}^{k+1}a_{i}}\\
 & = & -2\sum_{m=1}^{k+1}(-1)^{m}\,\,2^{-\sum_{i=1}^{m}a_{i}}=Q(p_{k+1}/q_{k+1}).
 \end{eqnarray*}
 Clearly, for $k$ even one can argue almost in the same way, and
this is left to the reader. This completes the inductive argument.\\
 We now still have to prove the assertions in (\ref{Q}). We do this 
 only
 for the case $k$ even, and leave `$k$ odd' up to the reader.
Recall that the interval bounded by $p_{k-1}/q_{k-1}$ and $p_{k+1}/q_{k+1}$
can be partitioned by the intermediate convergents $p_{k,m}/q_{k,m}$
of $x$. Here, $p_{k,m}/q_{k,m}$ is given by (see e.g. \cite{Khintchine},
see also Fig. \ref{cap:Intermediate-and-micro-intermediate} in the
proof of Proposition \ref{proposition}) \[
p_{k,m}:=mp_{k}+p_{k-1}\,\,\textrm{and }\,\,
q_{k,m}:=mq_{k}+q_{k-1},\,\,\textrm{for all }\,\, m \in \{0,\ldots,a_{k+1}\}.\]
 Then note that since $p_{k,1}/q_{k,1}$ is the mediant of $p_{k}/q_{k}$
and $p_{k-1}/q_{k-1}$, it follows that $|Q(p_{k,1}/q_{k,1})-Q(p_{k}/q_{k})|=2^{-1}|Q_{k}-Q_{k-1}|$.
Likewise, $p_{k,2}/q_{k,2}$ is the mediant of $p_{k,1}/q_{k,1}$
and $p_{k-1}/q_{k-1}$, and hence $|Q(p_{k,2}/q_{k,1})-Q(p_{k,1}/q_{k,1})|=
2^{-1}|Q_{k-1}-Q(p_{k,1}/q_{k,1})|=2^{-2}|Q_{k}-Q_{k-1}|$.
Clearly, this process can be continued until it terminates after $a_{k+1}$
steps. In the final step we obtain the identity $|Q_{k+2}-Q(p_{k,a_{k+1}-1}/q_{k,a_{k+1}-1})|=
2^{-a_{k+1}}|Q_{k}-Q_{k-1}|$.
The summation of these steps then gives \[
A_{k+1}=|Q_{k+2}-Q_{k}|=|Q_{k}-Q_{k-1}|\sum_{i=1}^{a_{k+1}}2^{-i}=(1-2^{-a_{k+1}})\,
|Q_{k}-Q_{k-1}|.\]
 This proves the first assertion in (\ref{Q}). The second assertion
in (\ref{Q}) is an immediate consequence of the well known fact that
the value of $x$ is greater than any of its even-order convergents
and is less than any of its odd-order convergents (see e.g. 
\cite{Khintchine}). This finishes the
proof of the equality of $\Delta_{\nu_{F}}$ and $Q$.\\
 For the proof of $T\circ\Delta_{\nu_{F}}=\Delta_{\nu_{F}}\circ F$,
note that if $x=\left[a_{1},a_{2},\ldots\right]$ is such that $a_{1}>1$,
then (\ref{alter}) gives \begin{eqnarray*}
T\left(\Delta_{\nu_{F}}\left(x\right)\right) & = & T\left(Q\left(x\right)\right)=
2\left(-2\sum_{k\in\N}(-1)^{k}\,\,
2^{-\sum_{i=1}^{k}a_{i}}\right)=-2\sum_{k\in\N}(-1)^{k}\,\,2^{-\sum_{i=1}^{k}a_{i}-1}\\
 & = & Q\left(\left[a_{1}-1,a_{2},\ldots\right]\right)=Q\left(x/(1-x)\right)=
 Q\left(F\left(x\right)\right)=\Delta_{\nu_{F}}\left(F\left(x\right)\right).\end{eqnarray*}
 Similar, for $x=\left[1,a_{2},\ldots\right]$ we have\begin{eqnarray*}
T\left(\Delta_{\nu_{F}}\left(x\right)\right) & = & T\left(Q\left(x\right)\right)=
2-2\left(-2\sum_{k\in\N}(-1)^{k}\,\,
2^{-\sum_{i=1}^{k}a_{i}}\right)=-2\sum_{k\in\N}(-1)^{k}\,\,2^{-\sum_{i=1}^{k}a_{i+1}}\\
 & = & Q\left((1-x)/x\right)=Q\left(F\left(x\right)\right)=
 \Delta_{\nu_{F}}\left(F\left(x\right)\right).\end{eqnarray*}
 Finally, the fact that $\Delta_{\nu_{F}}$ is a homeomorphism is
an immediate consequence of its construction. This finishes the proof.
\end{proof}
\begin{rem}
(1)~ An immediate implication of Proposition \ref{lem:The-distribution-function}
is that $\nu_{F}=\nu_{T}\circ Q$, and that the measure theoretical
and topological entropies $h_{\nu_{F}}(F)$, $h_{\nu_{T}}(T)$,  $h_{\mathrm{top}}\left(T\right)$
and $h_{\mathrm{top}}\left(F\right)$ of both systems coincide and
are equal to $h_{\mathrm{top}}:=\log2$. In fact, this also leads
to an alternative proof of the fact that $Q$ represents the distribution
function of $\nu_{F}$. Namely, \[
\nu_{F}\left([0,x)\right)=\nu_{T}\circ Q([0,x))=\lambda\circ Q([0,x))=
Q\left(x\right),\,\,\textrm{for each }\,\, x\in\mathcal{U}.\]
 (2) ~ Let us also remark that by the above, we immediately
have that \begin{equation}
Q\left(s_{n,k}/t_{n,k}\right)=k\,2^{-n},Q(T_{n,k})=D_{n,k},\,\,\textrm{and }\,\,
\nu_{F}(T_{n,k})=\lambda(Q(T_{n,k}))=2^{-n}.\label{form2}\end{equation}
 Also, the reader might like to recall that $Q$ is related to the
Stern-Brocot sequence $\left(s_{n,k}/t_{n,k}\right)$ in the following
way. We clearly have $Q(s_{0,0}/t_{0,0})=0$ and $Q(s_{0,1}/t_{0,1})=1$.
Moreover, for two neighbours in the $n$-th Stern-Brocot sequence,
we have \[
Q\left(\frac{s_{n,k}+s_{n,k+1}}{t_{n,k}+t_{n,k+1}}\right)=\frac{1}{2}
\left(Q\left(\frac{s_{n,k}}{t_{n,k}}\right)+Q\left(\frac{s_{n,k+1}}{t_{n,k+1}}\right)\right).\]
 Finally, recall that $x$ is rational if and only if $Q(x)$ has
a finite dyadic expansion, and that $x$ is a quadratic surd if and
only if $Q(x)$ is a rational number with an infinite dyadic expansion.
In fact, the latter two properties of $Q$ were Minkowski's original
main motivation for introducing the function $Q$ in the first place.
\end{rem}

\section{The integral of the Minkowski function w.r.t. the Gauss measure}

The following proposition gives the main result of this section. For
this recall that the Hausdorff dimension of a probability measure $\mu$ is given
by (see e.g. \cite{Fa}) \[
\dim_{H}(\mu):=\inf\left\{ \dim_{H}(X):\mu(X)=1\right\} .\]
 Also, let ${\mathbb E}_{\mu}(\Delta_{\nu}):=\int\Delta_{\nu}\, d\mu$
refer to the $\mu$-expectation of the distribution function $\Delta_{\nu}\in L^{1}(\mathcal{U},\mu)$
of $\nu$, for two probability measures $\nu$ and $\mu$ on $\mathcal{U}$.
Moreover, let $m_{G}$ refer to the Gauss measure. That is, $m_{G}$
refers to the invariant measure of the Gauss map $G:x\mapsto1/x\mod(1)$
absolutely continuous to $\lambda$.
\begin{prop}
\label{Lemma:Kinney} For the $m_{G}$-expectation of $\Delta_{\nu_{F}}$
and the $\nu_{F}$-expectation of $\Delta_{m_{G}}$, we have \[
{\mathbb E}_{m_{G}}(Q)=\frac{\dim_{H}(\nu_{F})-1/2}{\dim_{H}(\nu_{F})}\,\,\,\,
\textrm{and }\,\,\,\,{\mathbb E}_{\nu_{F}}(\Delta_{m_{G}})=\frac{1}{2\dim_{H}(\nu_{F})}.\]
\end{prop}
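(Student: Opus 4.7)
The plan is to reduce both identities to a single one via a Fubini-type argument, and then to evaluate the remaining integral through the Lyapunov exponent of $\nu_{F}$ together with the dimension formula $\dim_H(\nu_F)=h_{\nu_F}(F)/\chi_{\nu_F}(F)$. Since $m_G$ and $\nu_F$ are non-atomic probability measures on $\mathcal{U}$, Fubini's theorem applied to $\mathbf{1}_{\{y<x\}}+\mathbf{1}_{\{x<y\}}=1$ on the square $\mathcal{U}\times\mathcal{U}$ yields
\[
\mathbb{E}_{m_G}(Q)+\mathbb{E}_{\nu_F}(\Delta_{m_G})=1;
\]
since the proposed values already sum to one, the two identities are equivalent and only the second one needs to be proved. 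From $dm_G=(\log 2)^{-1}(1+x)^{-1}dx$ a direct integration gives $\Delta_{m_G}(x)=\log(1+x)/\log 2$, so the claim reduces to $\int\log(1+x)\,d\nu_F=\log 2/(2\dim_H(\nu_F))$.

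The decisive observation is that the two inverse branches $f_{1}(y)=y/(1+y)$ and $f_{2}(y)=1/(1+y)$ of $F$ both satisfy $|f_{i}'(y)|=(1+y)^{-2}$, and consequently $|F'(x)|=(1+F(x))^{2}$ for $\lambda$-almost every $x$. Using the $F$-invariance of $\nu_F$, this gives the clean identity
\[
\chi_{\nu_F}(F)=\int\log|F'|\,d\nu_F=2\int\log(1+y)\,d\nu_F(y).
\]
Combining this with $h_{\nu_F}(F)=h_{\mathrm{top}}=\log 2$ and the dimension formula $\dim_H(\nu_F)=h_{\nu_F}(F)/\chi_{\nu_F}(F)$ yields the required value of the integral.

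The main obstacle is the justification of the dimension formula itself: because the Farey map is not uniformly expanding (the fixed point at $0$ is indifferent, so $|F'(0)|=1$), the standard volume lemma for hyperbolic systems does not directly apply. I would handle this inside the Stern--Brocot cylinder framework of Section 2. By construction $\nu_{F}(T_{n,k})=2^{-n}$, while the Birkhoff ergodic theorem applied to $\log|F'|$, together with the distortion properties of $F$ on Stern--Brocot intervals, gives for $\nu_F$-a.e.\ $x$ the limit $\ell(x)=\lim_n n^{-1}\log(1/\lambda(T_n(x)))=\chi_{\nu_F}(F)$. The quotient of these two limits is the $\nu_F$-a.s.\ local dimension of $\nu_F$, and a standard comparison of Stern--Brocot cylinders with Euclidean balls identifies it with $\dim_H(\nu_F)$. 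This delivers $\dim_H(\nu_F)=\log 2/\chi_{\nu_F}(F)$ and completes the proof, with Kinney's classical formula then following as an immediate consequence.
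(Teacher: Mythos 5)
Your reduction and the core computation agree with the paper's: both routes establish the reciprocity identity $\mathbb{E}_{m_G}(Q)+\mathbb{E}_{\nu_F}(\Delta_{m_G})=1$ (the paper just asserts it, you give the Fubini justification, which is a nice touch), and both exploit that $\log|F'|$ equals $2\log(1+\cdot)\circ F$ together with $F$-invariance of $\nu_F$ to obtain $\chi_{\nu_F}=2h_{\mathrm{top}}\,\mathbb{E}_{\nu_F}(\Delta_{m_G})$. Where you genuinely diverge is in justifying the dimension formula $\dim_H(\nu_F)=h_{\mathrm{top}}/\chi_{\nu_F}$. The paper does not touch local dimensions at all: it evaluates the Legendre transform $\widehat{P}(-\chi_{\nu_F})=\sup_t\{-t\chi_{\nu_F}-P(t)\}=-P(0)=-h_{\mathrm{top}}$ (using $P'(0)=-\chi_{\nu_F}$ and $P(0)=h_{\mathrm{top}}$) and then simply invokes Theorem~\ref{KS} with the observation that $\nu_F$ is the Gibbs measure associated with the level set $\mathcal{L}(\chi_{\nu_F})$, so $\dim_H(\nu_F)=-\widehat{P}(-\chi_{\nu_F})/\chi_{\nu_F}$. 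You instead propose to re-derive the formula from the Birkhoff ergodic theorem, bounded distortion on Stern--Brocot cylinders, and a comparison of cylinders with Euclidean balls, correctly flagging the indifferent fixed point of $F$ as the obstacle to a naive volume lemma. That route is legitimate and more self-contained, but as written it is a sketch: you would still have to supply the cylinder-to-ball comparison and the lower bound on $\dim_H(\nu_F)$ carefully in the parabolic setting, which is exactly what the machinery of \cite{KesseboehmerStratmann:07} packaged into Theorem~\ref{KS} already delivers. So the two proofs meet at the same identity by the same central computation, and differ only in whether the dimension formula is quoted from the existing multifractal formalism or re-established by a direct ergodic argument.
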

\begin{proof}
First note that the Stern-Brocot pressure function at zero corresponds
to the Legendre transform $\widehat{P}$ at $-\chi_{\nu_{F}}$, where
$\chi_{\nu_{F}}:=\int\log|F'|\, d\nu_{F}$ denotes the Lyapunov exponent
of $F$. That is, \[
\widehat{P}(-\chi_{\nu_{F}})=\sup_{t\in\R}\{-t\cdot\chi_{\nu_{F}}-P(t)\}=
-\,0\cdot\chi_{\nu_{F}}-P(0)=-h_{\mathrm{top}}.\]
 Combining this observations with the fact that $\nu_{F}$ is the
$F$-invariant Gibbs measure associated with $\mathcal{L}(\chi_{\nu_{F}})$,
Theorem \ref{KS} implies \begin{equation}
\dim_{H}(\nu_{F})=\dim_{H}\left(\mathcal{L}(\chi_{\nu_{F}})\right)=
-\hat{P}(-\chi_{\nu_{F}})/\chi_{\nu_{F}}=h_{\mathrm{top}}/\chi_{\nu_{F}}.\label{R}\end{equation}
 Hence we are left with to determine $\chi_{\nu_{F}}$ in terms of
${\mathbb E}_{\nu_{F}}(\Delta_{m_{G}})$. For this, recall that for
the distribution function $\Delta_{m_{G}}$ of $m_{G}$ we have \[
\Delta_{m_{G}}(x):=m_{G}\left([0,x)\right)=\int_{0}^{x}1/(1+x)\,
d\lambda(x)/h_{\mathrm{top}}=\log(1+x)/h_{\mathrm{top}},\,\,\hbox{for all}\,\, x\in\mathcal{U}.\]
 Combining this with a straightforward computation of $|F'|$, one
immediately verifies \[
\log|F'|=2\, h_{\mathrm{top}}\cdot\left(\Delta_{m_{G}}\circ F\right).\]
 Hence, using the $F$-invariance of $\nu_{F}$, it follows \begin{eqnarray*}
\chi_{\nu_{F}}=\int\log|F'|\, d\nu_{F}=2\, h_{\mathrm{top}}\int\Delta_{m_{G}}\circ F\,
d\nu_{F}=2\, h_{\mathrm{top}}\int\Delta_{m_{G}}\, d\nu_{F}=2\, h_{\mathrm{top}}\,
{\mathbb E}_{\nu_{F}}(\Delta_{m_{G}}).\end{eqnarray*}
 By inserting this into (\ref{R}) and solving for ${\mathbb E}_{\nu_{F}}(m_{G})$,
the second equality in the proposition follows. The first equality
in the proposition is now an immediate consequence of the fact that
\[
{\mathbb E}_{m_{G}}(\Delta_{\nu_{F}})=1-{\mathbb E}_{\nu_{F}}(\Delta_{m_{G}}).\]
 Since by Proposition \ref{lem:The-distribution-function} we have
$\Delta_{\nu_{F}}=Q$, this finishes the proof.
\end{proof}
As an immediate consequence of Proposition \ref{Lemma:Kinney} we
obtain the following result of Kinney \cite{Kinney}, which we state in 
its `non-dynamical'
form in which it was given in \cite{Kinney}.

\begin{cor}
There exists a set $A\subset\mathcal{U}$ such that $\lambda(Q(A))=1$,
and \[
\dim_{H}(A)=\left(2\int_{0}^{1}\log_{2}(1+x)\,\, dQ(x)\right)^{-1}.\]

\end{cor}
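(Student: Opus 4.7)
The plan is to reduce this corollary to a reformulation of Proposition \ref{Lemma:Kinney} together with a standard selection argument based on the definition of $\dim_{H}(\nu_{F})$.

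First I would rewrite the right-hand side. Since $Q = \Delta_{\nu_{F}}$ by Proposition \ref{lem:The-distribution-function}, the Lebesgue-Stieltjes measure associated with $Q$ is exactly $\nu_{F}$, so
\[
\int_{0}^{1}\log_{2}(1+x)\,dQ(x)=\int_{\mathcal{U}}\log_{2}(1+x)\,d\nu_{F}(x)=\int_{\mathcal{U}}\Delta_{m_{G}}\,d\nu_{F}={\mathbb E}_{\nu_{F}}(\Delta_{m_{G}}),
\]
using the formula $\Delta_{m_{G}}(x)=\log(1+x)/h_{\mathrm{top}}=\log_{2}(1+x)$ derived in the proof of Proposition \ref{Lemma:Kinney}. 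Inserting the second identity of that proposition then yields
\[
\left(2\int_{0}^{1}\log_{2}(1+x)\,dQ(x)\right)^{-1}=\dim_{H}(\nu_{F}).
\]
So the corollary reduces to producing a set $A\subset\mathcal{U}$ with $\lambda(Q(A))=1$ and $\dim_{H}(A)=\dim_{H}(\nu_{F})$.

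Next I would exploit the key observation that $Q$ transports $\nu_{F}$ to Lebesgue measure. Indeed, since $\nu_{T}=\lambda$ and $\nu_{F}=\nu_{T}\circ Q$ by Proposition \ref{lem:The-distribution-function} and the remark following it, and since $Q$ is a homeomorphism of $\mathcal{U}$, we have
\[
\lambda(Q(A))=\nu_{T}(Q(A))=\nu_{F}(A)\qquad\textrm{for every Borel set }A\subset\mathcal{U}.
\]
Thus the requirement $\lambda(Q(A))=1$ is the same as $\nu_{F}(A)=1$.

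It therefore remains to exhibit a Borel set $A$ of full $\nu_{F}$-measure whose Hausdorff dimension is exactly $\dim_{H}(\nu_{F})$. Using the definition $\dim_{H}(\nu_{F})=\inf\{\dim_{H}(X):\nu_{F}(X)=1\}$, one can pick a sequence of Borel sets $A_{n}$ with $\nu_{F}(A_{n})=1$ and $\dim_{H}(A_{n})\searrow\dim_{H}(\nu_{F})$, and set $A:=\bigcap_{n\in\N}A_{n}$. Countable subadditivity of $\nu_{F}$ on the complements gives $\nu_{F}(A)=1$, while $A\subset A_{n}$ for every $n$ and the definition of $\dim_{H}(\nu_{F})$ together force $\dim_{H}(A)=\dim_{H}(\nu_{F})$. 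No serious obstacle arises; the only point to highlight is the identification of $dQ$ with $d\nu_{F}$, which is the reason Kinney's classical integral formula falls out of Proposition \ref{Lemma:Kinney} as a direct corollary.
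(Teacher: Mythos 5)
Your proof is correct and reaches the same formula, but the two arguments are organised differently. The paper re-derives $\chi_{\nu_F}$ from scratch via the inverse branches $f_i$ of the Farey map, obtaining $\chi_{\nu_F}=\int\log\left((1+x)^2\right)\,d\nu_F$, and then plugs this into the relation $\dim_H(\nu_F)=h_{\mathrm{top}}/\chi_{\nu_F}$ from (\ref{R}); you instead invoke the second identity ${\mathbb E}_{\nu_F}(\Delta_{m_G})=\tfrac{1}{2\dim_H(\nu_F)}$ of Proposition \ref{Lemma:Kinney} directly and use $\Delta_{m_G}(x)=\log_2(1+x)$, which is a shorter route that reuses what has already been proved rather than recomputing it; the arithmetic is the same either way. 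You also add something the paper's proof leaves implicit: an explicit justification that the infimum in the definition $\dim_H(\nu_F)=\inf\{\dim_H(X):\nu_F(X)=1\}$ is attained, via the intersection $A=\bigcap_n A_n$ of an approximating sequence, together with the identification $\lambda(Q(A))=\nu_F(A)$ coming from $\nu_F=\nu_T\circ Q$. This fills a small gap in the paper's exposition; the paper simply computes the formula and takes the existence of such a set $A$ for granted (one could also take $A=\mathcal{L}(\chi_{\nu_F})$, which has full $\nu_F$-measure and dimension $\dim_H(\nu_F)$, but your purely measure-theoretic argument is cleaner and requires no dynamics). Both proofs are sound; yours is slightly more self-contained on the "existence of $A$" point and slightly more reliant on Proposition \ref{Lemma:Kinney} for the integral identity.
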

\begin{proof}
Note that for the derivative $\left(f_{i}\right)'$ of the inverse
branches of $F$ we have \[
\left(f_{i}\right)'(x)=(1+x)^{-2},\,\textrm{ for all }\, x\in\mathcal{U},\, i\in\{1,2\}.\]
 Using this and the $F$-invariance of $\nu_{F}$, it follows \begin{eqnarray*}
\chi_{\nu_{F}} & = & \int\log|F'|\, d\nu_{F}=\int\left(\1_{[0,1/2)}
\log|F'\circ f_{1}\circ F|+\1_{[1/2,1]}\log|F'\circ f_{2}\circ F|\right)\, d\nu_{F}\\
 & = & -\int\log|\left(F^{-1}\right)'\circ F|\, d\nu_{F}=-\int\log|\left(F^{-1}\right)'|
 \, d\nu_{F}\\
 & = & \int_{\mathcal{U}}\log\left((1+x)^{2}\right)\, d\nu_{F}(x).\end{eqnarray*}
 Inserting this into (\ref{R}), the result follows.
\end{proof}
\begin{rem}
\label{remarkK} Note that in \cite{TU} the numerical
approximation $\dim_{H}(\nu_{F})\approx7/8$ was obtained (see also
\cite{L}). Hence, for the Stern-Brocot rate $\chi_{\nu_{F}}$
associated with $\nu_{F}$ we have that $\chi_{\nu_{F}}=h_{\mathrm{top}}/\dim_{H}(\nu_{F})\approx0.792$,
or in other words, $\ell(x)\approx0.792$ for $\nu_{F}$-almost every
$x\in\mathcal{U}$. Moreover, this also immediately gives ${\mathbb E}_{m_{G}}(\Delta_{\nu_{F}})\approx3/7$
and ${\mathbb E}_{\nu_{F}}(\Delta_{m_{G}})\approx4/7$. (In fact,
for the latter we derived, using numerical integration, the slightly better approximation
${\mathbb E}_{\nu_{F}}(\Delta_{m_{G}})=0.571612\ldots$). 

\end{rem}
Let us end this section by showing that the H\"older continuity of
$Q$ reflects precisely the range $[0,2\log\gamma]$ of the Lyapunov
spectrum associated with $\ell$. For this, note that Salem showed
in \cite{Salem} that $Q$ is $(\log2/(2\log\gamma))$-H\"older
continuous. That is, \[
|Q(x)-Q(y)|\ll\,|x-y|^{\log2/(2\log\gamma)},\,\textrm{ for all }\, x,y\in\mathcal{U}.\]
 (Note that $\log2/(2\log\gamma)\approx0.7202$). As a consequence
of this modulus of continuity of $Q$ we have the following.

\begin{lem}
For each $x\in\mathcal{U}$, we have\[
\limsup_{n\in\N}\,\ell_{n}(x)\leq2\log\gamma.\]
 Here, the constant $2\log\gamma\approx0.9624$ is best possible,
since it is attained for instance for each \emph{noble number}, that
is a number whose continued fraction expansion eventually contains
only $1$'s, and hence it is attained in particular for $x=\gamma^{*}:=1/\gamma$.
\end{lem}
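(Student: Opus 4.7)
The strategy is to combine the Hölder continuity of $Q$ established by Salem with the identity $\nu_F(T_n(x)) = \lambda(Q(T_n(x))) = 2^{-n}$ recorded in (\ref{form2}), which is the crucial bridge between Stern-Brocot intervals measured by $\lambda$ and by $\nu_F$. Applying Salem's estimate to the endpoints of $T_n(x)$ gives
\[
2^{-n} = \lambda(Q(T_n(x))) \ll \lambda(T_n(x))^{\log 2/(2\log\gamma)},
\]
and solving this inequality for $\lambda(T_n(x))$ yields $\lambda(T_n(x)) \gg \gamma^{-2n}$. Taking logarithms, dividing by $n$, and passing to the $\limsup$ produces the bound $\limsup_n \ell_n(x) \leq 2\log\gamma$ immediately.

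For the sharpness assertion I would focus on $\gamma^{*}=1/\gamma=[1,1,1,\ldots]$ and compute $\lambda(T_n(\gamma^*))$ exactly. A short induction on $n$, using that each iteration of $F$ subtracts one from the first partial quotient (see Proposition \ref{lem:The-distribution-function}) together with the mediant construction of Stern-Brocot neighbours, identifies the two endpoints of $T_n(\gamma^*)$ as the consecutive Fibonacci convergents $F_n/F_{n+1}$ and $F_{n+1}/F_{n+2}$. The classical identity $|a/b - c/d| = 1/(bd)$ for Stern-Brocot neighbours, combined with Binet's formula $F_k \sim \gamma^k/\sqrt{5}$, then gives $\lambda(T_n(\gamma^*)) \asymp \gamma^{-2n}$, hence $\ell_n(\gamma^*) \to 2\log\gamma$. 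For a general noble number $x$ whose continued fraction eventually becomes $[\ldots,1,1,\ldots]$, one reduces to this case: after finitely many Farey iterations the orbit enters the $\gamma^*$-regime, and the intervals $T_n(x)$ for large $n$ are images of $T_{n-M}(\gamma^*)$ under a fixed composition of inverse Farey branches with bounded distortion. The resulting constant factor is absorbed in the $1/n$ normalisation, so $\limsup_n \ell_n(x) = 2\log\gamma$ again.

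The main technical obstacle, though mild, is the inductive identification of $T_n(\gamma^*)$ with the Fibonacci-bounded interval: one must confirm that at each level $n$ the mediant of the current interval lies on a specific side of $\gamma^*$ in accordance with the Fibonacci recurrence, so that the denominators appearing at stage $n{+}1$ are indeed $F_{n+2}$ and $F_{n+3}$. Once this pattern is verified, every remaining step is either an invocation of Salem's Hölder estimate or a routine consequence of the asymptotics $F_k\sim \gamma^k/\sqrt{5}$.
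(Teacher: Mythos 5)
Your proposal is correct and follows essentially the same route as the paper. The upper bound $\limsup_n \ell_n(x)\leq 2\log\gamma$ is derived exactly as in the paper, from Salem's $(\log 2/(2\log\gamma))$-H\"older estimate for $Q$ applied to the endpoints of $T_n(x)$ combined with $\nu_F(T_n(x))=2^{-n}$. For the sharpness claim, you work directly with $\lambda(T_n(\gamma^*))=1/(F_{n+1}F_{n+2})$ and Binet's formula, whereas the paper instead computes $|\gamma^*-p_n/q_n|$ and $|Q(\gamma^*)-Q(p_n/q_n)|$ separately and compares; these are essentially the same Fibonacci/golden-ratio computation, and your formulation is in fact slightly more direct since it feeds straight into the definition of $\ell_n$. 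Your additional argument for general noble numbers (pulling back a fixed number of inverse Farey branches, which have derivative bounded between $1/4$ and $1$ on $\mathcal{U}$, so that a fixed power gives a uniform distortion constant that is absorbed by the $1/n$ normalisation) is correct and goes a bit beyond what the paper checks, which only treats $\gamma^*$ explicitly.
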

\begin{proof}
The $(\log2/(2\log\gamma))$-H\"older continuity of $Q$ implies
that for each $x\in\mathcal{U}$ and $n\in\N$, we have \[
\nu_{F}(T_{n}(x))=\lambda\left(Q(T_{n}(x))\right)\ll\left(\lambda\left(T_{n}(x)\right)\right)^{\log2/(2\log\gamma)}.\]
 This implies, with $C>0$ referring to some universal constant,
\[
-n\log2=\log\nu_{F}(T_{n}(x))\leq\frac{\log2}{2\log\gamma}\log\lambda\left(T_{n}(x)\right)+C,\]
 which gives \[
\limsup_{n\in\N}\,\ell_{n}(x)\leq2\log\gamma.\]
 For the remaining assertion recall that numerator and denominator
of the $n$-th convergent $p_{n}/q_{n}:=p_{n}(\gamma^{*})/q_{n}(\gamma^{*})$
of $\gamma^{*}$ are equal to the $n$-th and $(n+1)$-th member
of the Fibonacci sequence. That is, \[
p_{n}=\left(\gamma^{n}-\left(-\gamma^{*}\right)^{n}\right)/\sqrt{5}\,\,\,\textrm{and }\,\,\, q_{n}=p_{n+1}.\]
 Using this together with a well known Diophantine identity for continued
fractions (see e.g. \cite{Khintchine}), one immediately obtains,
with $\left(O_{i,n}\right)$ referring to certain sequences which
tend to zero for $n$ tending to infinity, 
\begin{eqnarray*}
\left|\gamma^{*}-p_{n}/q_{n}\right| & = & \frac{1}{q_{n}^{2}(\gamma+p_{n}/q_{n})}=\frac{1}{q_{n}^{2}(\sqrt{5}+O_{1,n})}
=\frac{5}{\sqrt{5}+O_{1,n}}\left(\gamma^{n+1}-(-\gamma^{*})^{n+1}\right)^{-2}\\
 & = & \frac{\sqrt{5}+O_{2,n}}{\gamma^{2}+O_{3,n}}\gamma^{-2n}
=\left(\gamma^{-2}\sqrt{5}+O_{4,n}\right)\gamma^{-2n}.
\end{eqnarray*}
Note that $Q(\gamma^{*})=\sum_{i=0}^{\infty}(-2)^{-i}$ and
$Q(p_{n}/q_{n})=\sum_{i=0}^{n-1}(-2)^{-i}$, and hence, with $O_{5,n}:=|\sum_{i=n+1}^{\infty}(-2)^{-i}|$,
\[
\left|Q(\gamma^{*})-Q(p_{n}/q_{n})\right|=2^{-n}-O_{5,n}.\]
 Combining these two observations, it follows 
\begin{eqnarray*}
\left|Q(\gamma^{*})-Q(p_{n}/q_{n})\right| & = & \left(\gamma^{-2n}\right)^{\log2/(2\log\gamma)}-O_{5,n}\\
 & = & \left(\gamma^{-2}\sqrt{5}+O_{4,n}\right)^{-\log2/(2\log\gamma)}
\left|\gamma^{*}-p_{n}/q_{n}\right|^{\log2/(2\log\gamma)}-O_{5,n}.
\end{eqnarray*}
 By taking logarithms, the result follows.
\end{proof}

\section{The derivative of the Minkowski function}

Let us begin our analysis of the derivative of $Q$ with the following
lemma. Note that the instance in which either $Q'(x)$ exists or $Q'(x)=\infty$
will be referred to as $Q'(x)$ \emph{exists in the generalised sense}.

\begin{lem}
\label{Lemma0} For each $x\in\mathcal{U}$ we have that if $Q'(x)$
exists in the generalised sense, then \[
Q'(x)=\lim_{n\rightarrow\infty}\frac{\nu_{F}(T_{n}(x))}{\lambda(T_{n}(x))}.\]

\end{lem}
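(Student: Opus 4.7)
The plan is to identify the quotient $\nu_{F}(T_{n}(x))/\lambda(T_{n}(x))$ with a two-sided secant slope of $Q$ across the shrinking interval $T_{n}(x)$, and then invoke the elementary fact that such slopes converge to $Q'(x)$ whenever the latter exists in the generalised sense. The work splits naturally into three steps.

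First, I would use Proposition \ref{lem:The-distribution-function}, which identifies $Q$ with the distribution function $\Delta_{\nu_{F}}$ of $\nu_{F}$. Writing the unique Stern-Brocot interval containing $x$ as $T_{n}(x)=[a_{n},b_{n})$ with $a_{n}=s_{n,k_{n}}/t_{n,k_{n}}$ and $b_{n}=s_{n,k_{n}+1}/t_{n,k_{n}+1}$, this gives $\nu_{F}(T_{n}(x))=Q(b_{n})-Q(a_{n})$ while $\lambda(T_{n}(x))=b_{n}-a_{n}$, so
\[
\frac{\nu_{F}(T_{n}(x))}{\lambda(T_{n}(x))}=\frac{Q(b_{n})-Q(a_{n})}{b_{n}-a_{n}}.
\]
Hence the lemma reduces to the statement that these secant slopes of $Q$ across $T_{n}(x)$ converge to $Q'(x)$.

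Second, I would record that the Stern-Brocot intervals are nested ($T_{n+1}(x)\subseteq T_{n}(x)$) and that $\lambda(T_{n}(x))=1/(t_{n,k_{n}}t_{n,k_{n}+1})\to0$; consequently $a_{n}\nearrow x$ and $b_{n}\searrow x$, with the possibility that $a_{n}=x$ for all large $n$ precisely when $x$ is a Stern-Brocot rational.

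Third, I would decompose the secant slope as a convex combination of one-sided difference quotients at $x$:
\[
\frac{Q(b_{n})-Q(a_{n})}{b_{n}-a_{n}}=\frac{b_{n}-x}{b_{n}-a_{n}}\cdot\frac{Q(b_{n})-Q(x)}{b_{n}-x}+\frac{x-a_{n}}{b_{n}-a_{n}}\cdot\frac{Q(x)-Q(a_{n})}{x-a_{n}},
\]
with the second term omitted if $a_{n}=x$. Since $a_{n}\nearrow x$ and $b_{n}\searrow x$ strictly, the hypothesis that $Q'(x)$ exists in the generalised sense (including the value $\infty$) forces each of the two one-sided quotients to tend to $Q'(x)$; hence so does any convex combination of them, both in the finite case and, by a direct monotone-convex-combination argument, in the case $Q'(x)=\infty$.

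The only genuinely delicate point, and the one I would be most careful about, is the edge case where $x$ is rational and coincides with $a_{n}$ for all large $n$: then the secant slope is purely one-sided, and one must invoke that the generalised derivative is by definition two-sided so that the right-hand derivative equals $Q'(x)$. The remaining steps (the nesting and shrinkage of Stern-Brocot intervals, and the standard convergence of the straddled difference quotient to the derivative) are routine once the identification with $Q$ from Proposition \ref{lem:The-distribution-function} is in place.
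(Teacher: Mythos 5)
Your proposal is correct and follows essentially the same route as the paper: both reduce the claim, via $Q=\Delta_{\nu_F}$, to showing that the secant slope of $Q$ over $T_n(x)$ converges to $Q'(x)$, and both achieve this by comparing the secant slope with the two one-sided difference quotients at $x$. Where the paper phrases the comparison as a case split (whether $(x,Q(x))$ lies above or below the chord, giving a squeeze between the left and right quotients), you instead write the secant slope explicitly as a convex combination of those two quotients, which packages the same squeeze more transparently and also handles the infinite case ($Q'(x)=\infty$) in one stroke via $\alpha u+(1-\alpha)v\ge\min(u,v)$; you are also more careful than the paper about the degenerate case $a_n=x$ for rational $x$, which the paper glosses over. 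One tiny imprecision: $a_n\nearrow x$ and $b_n\searrow x$ need not be \emph{strictly} monotone (at each refinement step only one endpoint moves), though both sequences do converge to $x$ since $\lambda(T_n(x))\to 0$, which is all that is needed.
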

\begin{proof}
Let $x\in\mathcal{U}$ be given, and assume that $Q'(x)$ exists in
the generalised sense. Let $T_{n}(x)=[s_{n,k}/t_{n,k},s_{n,k+1}/t_{n,k+1})$
be the unique Stern-Brocot interval in $\mathcal{T}_{n}$ which contains
$x$. Note that the alternating sum representation (\ref{alter})
of $Q$ immediately gives that $Q$ is a strictly increasing function.
Using this, it follows that for each $n\in\N$ one of the following
two cases has to occur. Firstly, if $Q(x)$ lies below or on the line
through $Q(s_{n,k}/t_{n,k})$ and $Q(s_{n,k+1}/t_{n,k+1})$, then
\[
\frac{Q(x)-Q(s_{n,k}/t_{n,k})}{x-s_{n,k}/t_{n,k}}\leq\frac{Q(s_{n,k+1}/t_{n,k+1})-Q(s_{n,k}/t_{n,k})}{s_{n,k+1}/t_{n,k+1}-s_{n,k}/t_{n,k}}\leq\frac{Q(s_{n,k+1}/t_{n,k+1})-Q(x)}{s_{n,k+1}/t_{n,k+1}-x}.\]
 Secondly, if $Q(x)$ lies above or on the line through $Q(s_{n,k}/t_{n,k})$
and $Q(s_{n,k+1}/t_{n,k+1})$, then \[
\frac{Q(s_{n,k+1}/t_{n,k+1})-Q(x)}{s_{n,k+1}/t_{n,k+1}-x}\leq\frac{Q(s_{n,k+1}/t_{n,k+1})-Q(s_{n,k}/t_{n,k})}{s_{n,k+1}/t_{n,k+1}-s_{n,k}/t_{n,k}}\leq\frac{Q(x)-Q(s_{n,k}/t_{n,k})}{x-s_{n,k}/t_{n,k}}.\]
 Hence, by taking the limit for $n$ tending to infinity, and noting
that \[
Q(s_{n,k+1}/t_{n,k+1})-Q(s_{n,k}/t_{n,k})=\nu_{F}([0,s_{n,k+1}/t_{n,k+1}))-\nu_{F}([0,s_{n,k}/t_{n,k}))=\nu_{F}(T_{n}(x)),\]
 the assertion follows.
\end{proof}
The following result was obtained in \cite{PVB:01} using continued
fraction expansions. Here, we give an alternative proof which uses
Stern-Brocot sequences, and which  appears to us to be far more canonical
than the one given in \cite{PVB:01}.

\begin{lem}
\label{Lemma1} For each $x\in\mathcal{U}$ we have that if $Q'(x)$
exists in the generalised sense, then \[
Q'(x)\in\{0,\infty\}.\]

\end{lem}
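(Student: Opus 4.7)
The plan is to invoke Lemma \ref{Lemma0} and then exploit the Stern-Brocot refinement arithmetic to rule out every finite positive value. Writing $T_n(x)=[s_{n,k}/t_{n,k},s_{n,k+1}/t_{n,k+1})$ and abbreviating $(q_n,q_n'):=(t_{n,k},t_{n,k+1})$, the standard Farey-neighbour identity $s_{n,k+1}t_{n,k}-s_{n,k}t_{n,k+1}=1$ gives $\lambda(T_n(x))=1/(q_n q_n')$, while $\nu_F(T_n(x))=2^{-n}$ has already been recorded in Section~3. Lemma~\ref{Lemma0} therefore rewrites as $Q'(x)=\lim_{n\to\infty}R_n$ with $R_n:=2^{-n}q_n q_n'$.

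Next I would suppose for contradiction that $Q'(x)=L$ for some $L\in(0,\infty)$. Then $R_{n+1}/R_n\to 1$. On the other hand, the Stern-Brocot refinement replaces the pair $(q_n,q_n')$ either by $(q_n,q_n+q_n')$, when $x$ lies to the left of the mediant, or by $(q_n+q_n',q_n')$, when $x$ lies to its right; so a short computation yields
$$\frac{R_{n+1}}{R_n}\;=\;\frac12\left(1+\frac{q_n}{q_n'}\right)\qquad\text{or}\qquad\frac12\left(1+\frac{q_n'}{q_n}\right).$$
Setting $r_n:=q_n/q_n'$, the convergence $R_{n+1}/R_n\to 1$ forces $r_n\to 1$ regardless of which branch is selected at each step.

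The final ingredient is the observation that the very same refinement rule also drives $r_n$ itself, namely $r_{n+1}=r_n/(1+r_n)$ at a left step and $r_{n+1}=1+r_n$ at a right step. Evaluating either map at the hypothetical limit $r=1$ yields $r_{n+1}\to 1/2$ or $r_{n+1}\to 2$, neither of which equals $1$; so $r_n\to 1$ is impossible, completing the contradiction. Combined with Lemma~\ref{Lemma0}, this forces $Q'(x)\in\{0,\infty\}$ whenever the generalised derivative exists.

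The main obstacle is essentially just the bookkeeping: recognising $R_n$ as $2^{-n}q_n q_n'$ and seeing that its one-step multiplicative change is determined by the single scalar $r_n=q_n/q_n'$. Once that packaging is in place, the absence of a fixed point at $r=1$ for the Stern-Brocot action on $r$ is visible at a glance, and no continued-fraction machinery is required. This is why the argument turns out to be noticeably shorter and more canonical than the continued-fraction proof it replaces.
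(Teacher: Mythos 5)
Your proposal is correct, and it follows the same overall strategy as the paper's proof: both invoke Lemma~\ref{Lemma0} to reduce the problem to the ratio $R_n:=\nu_F(T_n(x))/\lambda(T_n(x))=2^{-n}q_nq_n'$, assume for contradiction that $R_n\to c\in(0,\infty)$, and both extract the denominator ratio $r_n:=q_n/q_n'$ and show that this forces $r_n\to 1$. Where the two arguments differ is in the closing step. The paper reduces to irrational $x$, isolates ``type-changes'' (a right step immediately followed by a left step), computes $\lambda(T_{n-1}(x))/\lambda(T_n(x))=1/(1-1/r_n)$ at those indices, and concludes that since there are infinitely many type-changes this ratio would blow up rather than converge to $2$. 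You instead track $r_n$ directly as a state driven by the refinement and note that neither branch map $r\mapsto r/(1+r)$ (left step) nor $r\mapsto 1+r$ (right step) fixes $r=1$; hence $r_n\to1$ would force $r_{n+1}\to 1/2$ or $r_{n+1}\to 2$, incompatible with $(r_{n+1})$ being a shifted copy of $(r_n)$. Your packaging dispenses with the type-change bookkeeping and the WLOG irrationality reduction (a left-only tail already drives $r_n\to 0$ and $R_{n+1}/R_n\to 1/2$, so the contradiction is reached at once), and makes the structural obstruction—that the Stern--Brocot refinement cannot equilibrate the denominator ratio at $1$—more visible. Both arguments rest on the same refinement arithmetic; yours is a genuine streamlining of the same idea rather than a different method.
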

\begin{proof}
Let $x\in\mathcal{U}$ be given such that $Q'(x)$ exists in the generalised
sense. Without loss of generality we can assume that $x$ is irrational.
By Lemma \ref{Lemma0}, we then have \[
Q'(x)=\lim_{n\rightarrow\infty}\frac{\nu_{F}(T_{n}(x))}{\lambda(T_{n}(x))}.\]
 Let us assume by way of contradiction that $Q'(x)=c$, for some $0<c<\infty$.
Since we have $Q'(x)=\lim_{n\rightarrow\infty}2^{-n}/\lambda\left(T_{n}(x)\right)$,
it follows that  \[
\lim_{n\rightarrow\infty}\frac{2^{n}\,\lambda\left(T_{n}(x)\right)}{2^{n+1}\,\lambda\left(T_{n+1}(x)\right)}=1,\]
 and hence, \begin{equation}
\lim_{n\rightarrow\infty}\frac{\lambda\left(T_{n}(x)\right)}{\lambda\left(T_{n+1}(x)\right)}=2.\label{form3}\end{equation}
 In order to proceed, let $T_{n}(x)=[s_{n,k}/t_{n,k},s_{n,k+1}/t_{n,k+1})$,
and assume that there is a `type-change' at $T_{n}(x)$. That is,
assume that $T_{n-1}(x)=[(s_{n,k}-s_{n,k+1})/(t_{n,k}-t_{n,k+1}),s_{n,k+1}/t_{n,k+1})$
and $T_{n+1}(x)=[s_{n,k}/t_{n,k},(s_{n,k}+s_{n,k+1})/(t_{n,k}+t_{n,k+1}))$.
We then immediately obtain \[
\frac{\lambda\left(T_{n}(x)\right)}{\lambda\left(T_{n+1}(x)\right)}=\frac{s_{n,k+1}/t_{n,k+1}-s_{n,k}/t_{n,k}}{(s_{n,k}+s_{n,k+1})/(t_{n,k}+t_{n,k+1})-s_{n,k}/t_{n,k}}=\frac{t_{n,k}(t_{n,k}+t_{n,k+1})}{t_{n,k}t_{n,k+1}}=1+\frac{t_{n,k}}{t_{n,k+1}}.\]
 Combining this with (\ref{form3}), it follows \begin{equation}
\lim_{n\rightarrow\infty}\frac{t_{n,k}}{t_{n,k+1}}=1.\label{formA}\end{equation}
 By considering the quotient of $\lambda\left(T_{n-1}(x)\right)$
and $\lambda\left(T_{n}\left(x\right)\right)$, a similar computation
gives \begin{equation}
\frac{\lambda\left(T_{n-1}(x)\right)}{\lambda\left(T_{n}(x)\right)}=\frac{t_{n,k}}{t_{n,k}-t_{n,k+1}}=\frac{1}{1-t_{n,k+1}/t_{n,k}}.\label{formB}\end{equation}
 Then observe that since $x$ is irrational, there have to be infinitely
many type-changes in $\{ T_{n}(x):n\in\N\}$. That is, there exist
sequences $(n_{i})_{i\in\N}$ and $(k_{i})_{i\in\N}$ such that $T_{n_{i}}(x)=[s_{n_{i},k_{i}}/t_{n_{i},k_{i}},s_{n_{i},k_{i}+1}/t_{n_{i},k_{i}+1})$,
and such that there is a type-change at $T_{n_{i}}(x)$ for each
$i\in\N$. Therefore, combining this with (\ref{formA}) and (\ref{formB}),
it now follows \[
\lim_{i\rightarrow\infty}\frac{\lambda(T_{n_{i}-1}(x))}{\lambda\left(T_{n_{i}}(x)\right)}=\lim_{i\rightarrow\infty}\frac{1}{1-t_{n_{i},k_{i}+1}/t_{n_{i},k_{i}}}=\infty.\]
 This contradicts (\ref{form3}), and hence finishes the proof of
the lemma.
\end{proof}

The following proposition will turn out to be crucial in the 
multifractal analysis to come. For ease of exposition, we let 
$[x,y)_{\pm}$
refer to the interval bounded by $x$ and $y$. That is, $[x,y)_{\pm}:=
[x,y)$ if $x\leq y$, and $[x,y)_{\pm}:= [y,x)$
if $x \geq y$.
\begin{prop}
\label{proposition} For $x=[a_{1},a_{2},\ldots]\in\mathcal{U}$ and with
 $p_{k}/q_{k}$ referring to the $k$-th convergent of $x$, the 
 following hold.
\begin{itemize}
\item[(i)] 
\[
\hbox{If} \, \, \,\lim_{k\rightarrow\infty}\frac{\nu_{F}
\left([p_{k}/q_{k},p_{k+1}/
q_{k+1})_{\pm}\right)}{\lambda\left([p_{k}/q_{k},p_{k+1}/q_{k+1})_{\pm}\right)}
=\infty, \, \, \hbox{then} \, \,\,
Q'(x)=\infty.\]
\item[(ii)] \[ \hbox{If}\, \, \, 
\lim_{k\rightarrow\infty} a_{k+1} \cdot   \frac{\nu_{F} \left([p_{k}/q_{k},p_{k+1}/
q_{k+1})_{\pm}\right)}{\lambda\left([p_{k}/q_{k},p_{k+1}/
q_{k+1})_{\pm}\right)}
=0, \, \, \hbox{then} \, \, \,
Q'(x)=0.\]
\end{itemize}
\end{prop}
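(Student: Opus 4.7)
The plan is to translate the two hypotheses into uniform statements about the Stern-Brocot ratio $r_n(x):=\nu_F(T_n(x))/\lambda(T_n(x))$ along the full sequence $n\in\mathbb{N}$, and then transfer that control to $Q'$ by means of the sandwich inequality from the proof of Lemma~\ref{Lemma0}. First I would parametrise the Stern-Brocot intervals containing $x$ via convergents and intermediate convergents. Setting $N_k:=\sum_{i=1}^k a_i$, the interval $T_{N_k-1+m}(x)$ (for $0\le m\le a_{k+1}$) has endpoints $p_k/q_k$ and $p_{k,m}/q_{k,m}=(m p_k+p_{k-1})/(m q_k+q_{k-1})$, and the identity $|p_k q_{k-1}-p_{k-1}q_k|=1$ yields $\lambda(T_{N_k-1+m}(x))=1/(q_k(mq_k+q_{k-1}))$. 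Thus
\[
r_{k,m}:=r_{N_k-1+m}(x)=2^{-(N_k-1+m)}\,q_k\,(m q_k+q_{k-1}),
\]
and the boundary value $r_{k,a_{k+1}}=2^{-(N_{k+1}-1)}q_k q_{k+1}$ coincides exactly with $\nu_F(I_k)/\lambda(I_k)$ for $I_k:=[p_k/q_k,p_{k+1}/q_{k+1})_\pm$, while the chain-matching identity $r_{k,a_{k+1}}=r_{k+1,0}$ follows from $q_{k+1}=a_{k+1}q_k+q_{k-1}$.

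A direct computation gives $r_{k,m+1}/r_{k,m}=\tfrac{1}{2}(1+q_k/(m q_k+q_{k-1}))$, which exceeds $1$ only at $m=0$ (using $q_k\ge q_{k-1}$), so $m\mapsto r_{k,m}$ is unimodal on $\{0,\ldots,a_{k+1}\}$ with minimum at the endpoints and maximum at $m=1$. For part~(i), the hypothesis $r_{k,a_{k+1}}\to\infty$ together with $r_{k,0}=r_{k-1,a_k}$ forces both chain endpoints to diverge, so $r_n(x)\ge\min(r_{k,0},r_{k,a_{k+1}})\to\infty$ uniformly. For part~(ii), the hypothesis $a_{k+1}r_{k,a_{k+1}}\to 0$ is equivalent to $a_k r_{k,0}\to 0$, and using $q_k/q_{k-1}\le a_k+1$ I would estimate the chain maximum as
\[
r_{k,1}\le r_{k,0}\cdot\tfrac{1}{2}(1+q_k/q_{k-1})\le\tfrac{a_k+2}{2}\,r_{k,0}\le\tfrac{3}{2}\,a_k\,r_{k,0}\longrightarrow 0,
\]
whence by unimodality $r_n(x)\le r_{k,1}\to 0$ uniformly. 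Consequently $r_n(x)\to\infty$ in case~(i) and $r_n(x)\to 0$ in case~(ii).

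To deduce $Q'(x)=\infty$ (resp.\ $Q'(x)=0$) from these uniform limits, I would proceed as follows. For $y\ne x$ sufficiently close to $x$, set $n^*:=\max\{n:y\in T_n(x)\}$; then $y$ lies in the Stern-Brocot sibling of $T_{n^*+1}(x)$ inside $T_{n^*}(x)$. Writing $\xi$ for their common endpoint (a Stern-Brocot rational of level $n^*+1$ lying between $x$ and $y$), the difference quotient decomposes as the convex combination
\[
\frac{Q(y)-Q(x)}{y-x}=\frac{\xi-x}{y-x}\cdot\frac{Q(\xi)-Q(x)}{\xi-x}+\frac{y-\xi}{y-x}\cdot\frac{Q(y)-Q(\xi)}{y-\xi}.
\]
The first sub-quotient is trapped by the Lemma~\ref{Lemma0} sandwich at level $n^*+1$ applied at $x$, and the second by the analogous sandwich applied at $y$; both are bounded two-sidedly by constant multiples of $r_{n^*+1}(x)$ and $r_{n^*+1}(y)$. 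Since $y\to x$ forces $n^*\to\infty$, the uniform control from the preceding paragraph drives both sub-quotients (and hence the whole convex combination) to $\infty$ in case~(i) and to $0$ in case~(ii).

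The principal obstacle is this final transfer step. Lemma~\ref{Lemma0} only produces the Stern-Brocot ratio under the \emph{a priori} assumption that $Q'(x)$ exists, and at each level its sandwich pins down only one of the two one-sided difference quotients to $\alpha_n,\beta_n$. The resolution is the double-sandwich trick above: the ambiguity about which side is pinned at $x$ is absorbed by simultaneously invoking the sandwich at $y$ around the shared endpoint $\xi$, so that the "unpinned" sides at $x$ and at $y$ cannot conspire to spoil the bound.
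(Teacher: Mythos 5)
Your first step --- translating the hypotheses on the convergent quotients into a statement about the full Stern--Brocot ratio $r_n(x):=\nu_F(T_n(x))/\lambda(T_n(x))$ --- is correct and, in fact, rather elegant. The identity $r_{k,m}=2^{-(N_k-1+m)}q_k(mq_k+q_{k-1})$, the chain-matching relation $r_{k,a_{k+1}}=r_{k+1,0}$, and the unimodality of $m\mapsto r_{k,m}$ (increase only from $m=0$ to $m=1$, then decrease) are all right, and they cleanly explain both why the hypothesis of (i) forces $r_n(x)\to\infty$ along the whole sequence and why the extra factor $a_{k+1}$ in (ii) is exactly what is needed to force $r_{k,1}\to 0$ and hence $r_n(x)\to 0$. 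This is a nice repackaging of what the paper obtains by direct manipulation of the continued-fraction data.

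The transfer step, however, has a genuine gap. You write that the sub-quotients $\frac{Q(\xi)-Q(x)}{\xi-x}$ and $\frac{Q(y)-Q(\xi)}{y-\xi}$ are ``bounded two-sidedly by constant multiples of $r_{n^*+1}(x)$ and $r_{n^*+1}(y)$'' via the sandwich of Lemma~\ref{Lemma0}. This misreads what the sandwich actually delivers: at any fixed level $n$ it only asserts that \emph{one} of the two one-sided endpoint-quotients at $x$ lies above $r_n(x)$ and the \emph{other} lies below it, with no a priori information about which one plays which role. It gives no two-sided bound on either individual quotient. In particular, nothing stops the sub-quotient towards the shared endpoint $\xi$ from being on the ``small'' side of the sandwich at $x$ \emph{and} the sub-quotient towards $\xi$ from being on the ``small'' side of the sandwich at $y$ simultaneously; there is no correlation between whether $(x,Q(x))$ lies above the chord over $T_{n^*+1}(x)$ and whether $(y,Q(y))$ lies above the chord over $T_{n^*+1}(y)$. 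When both sub-quotients are on the wrong side, your convex combination argument gives no bound at all, so the claimed ``double-sandwich'' resolution does not close the argument. The same objection applies symmetrically in case~(ii). This is exactly the difficulty that the paper handles by bypassing the sandwich entirely and instead estimating $|Q(y)-Q(x)|$ and $|y-x|$ directly from the alternating-sum formula for $Q$, over the ranges $p_{k,m+1}/q_{k,m+1}<y\le p_{k,m}/q_{k,m}$, together with a separate treatment of the range $m=a_{k+1}-1$ via the micro-intermediate convergents $\hat p_{k,n}/\hat q_{k,n}$. Those explicit two-sided bounds on $Q(y)-Q(x)$ near $x$ are precisely the input your argument is missing; without them, the passage from $r_n(x)\to\infty$ (resp.\ $r_n(x)\to0$) to $Q'(x)=\infty$ (resp.\ $0$) is not established.
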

\begin{proof}
Let $x=[a_{1},a_{2},\ldots]\in\mathcal{U}$ be given as stated in
(i). Using  (\ref{alter}) and the fact that $|p_{k}q_{k+1}-p_{k+1}q_{k}|=1$,
we immediately obtain \begin{eqnarray}
\frac{\nu_{F}\left([p_{k}/q_{k},p_{k+1}/q_{k+1})_{\pm}
\right)}{\lambda\left([p_{k}/q_{k},p_{k+1}/q_{k+1})_{\pm}\right)}=\frac{|Q(p_{k}/q_{k})-Q(p_{k+1}/q_{k+1})|}{|p_{k}/q_{k}-p_{k+1}/q_{k+1}|}=\frac{2q_{k}q_{k+1}}{2^{\sum_{i=1}^{k+1}a_{i}}}.\label{B}\end{eqnarray}
 Before we proceed, let us first recall that the intermediate convergents
$p_{k,m}/q_{k,m}$ of $x$ are given by (see e.g. \cite{Khintchine})
\[
p_{k,m}:=mp_{k}+p_{k-1}\,\,\textrm{and }\,\, q_{k,m}:=mq_{k}+q_{k-1},\,\,\textrm{for all }\,\,
m\in \{0,\ldots,a_{k+1}\}.\]
 Since $p_{k,0}/q_{k,0}=p_{k-1}/q_{k-1}=[a_{1},\ldots,a_{k-1}],p_{k,a_{k+1}}/q_{k,a_{k+1}}=p_{k+1}/q_{k+1}$
and $p_{k,n}/q_{k,n}=[a_{1},\ldots,a_{k},n]$ for $n\in \{1,\ldots,a_{k+1}\}$,
we immediately obtain from (\ref{alter}) that for each $m\in \{0,\ldots,a_{k+1}-1\}$,
\[
|Q(x)-Q(p_{k,m}/q_{k,m})|\gg2^{-(m+\sum_{j=1}^{k}a_{j})},\]
 and \[
|Q(x)-Q(p_{k,a_{k+1}}/q_{k,a_{k+1}})|\gg2^{-\sum_{j=1}^{k+2}a_{j}}.\]
 We then compute for $m\in \{0,\ldots,a_{k+1}\}$, with $r_{n}:=[a_{n};a_{n+1},\ldots]$
referring to the $n$-th remainder of $x$, \begin{eqnarray*}
|x-p_{k,m}/q_{k,m}| & = & \left|\frac{r_{k+1}p_{k}+p_{k-1}}{r_{k+1}q_{k}+q_{k-1}}-\frac{mp_{k}+p_{k-1}}{mq_{k}+q_{k-1}}\right|=\frac{r_{k+1}-m}{(r_{k+1}q_{k}+q_{k-1})(mq_{k}+q_{k-1})}\\
\end{eqnarray*}
 Now, let $y\in\mathcal{U}$ be fixed such that $y>x$. Then there
exist $k\in\N$ and $m\in\{0,\ldots,a_{k+1}-1\}$ such that $p_{k,m+1}/q_{k,m+1}<y\leq p_{k,m}/q_{k,m}$.
For each $m\in \{0,\ldots,a_{k+1}-2\}$, we then have \begin{eqnarray*}
\frac{Q(y)-Q(x)}{y-x} & \geq & \frac{Q(p_{k,m+1}/q_{k,m+1})-Q(x)}{p_{k,m}/q_{k,m}-x}\\
 & \gg & \frac{(r_{k+1}q_{k}+q_{k-1})(mq_{k}+q_{k-1})}{q_{k}q_{k+1}(r_{k+1}-m)}\frac{q_{k}q_{k+1}}{2^{(m+1)+\sum_{j=1}^{k}a_{j}}}\\
 & = & \frac{2^{a_{k+1}-(m+1)}(mq_{k}+q_{k-1})(r_{k+1}q_{k}+q_{k-1})}{(r_{k+1}-m)q_{k}q_{k+1}}\frac{q_{k}q_{k+1}}{2^{\sum_{j=1}^{k+1}a_{j}}}\\
 & \gg & \frac{q_{k}q_{k+1}}{2^{\sum_{j=1}^{k+1}a_{j}}}.\end{eqnarray*}
 Note that the latter argument does not work for $m=a_{k+1}-1$.
In this case, that is for $p_{k+1}/q_{k+1}<y\leq p_{k,a_{k+1}-1}/q_{k,a_{k+1}-1}$,
we have to consider the partition of the interval $(p_{k+1}/q_{k+1},p_{k,a_{k+1}-1}/q_{k,a_{k+1}-1}]$
obtained from what we call the `micro-intermediate
convergents' $\hat{p}_{k,n}/\hat{q}_{k,n}$
\begin{figure}[b]
\psfrag{x2}{\Large\({\frac{p_{k-1}}{q_{k-1}}}\)}

 \psfrag{x1}{\Large\(\frac{p_k}{q_k}\)}

\psfrag{x}{\LARGE\(x\)}

\psfrag{x3}{ \Large\(\frac{p_{k,1}}{q_{k,1}}\)}

\psfrag{x4}{ \Large\(\frac{p_{k,a_{k+1}-1}}{q_{k,a_{k+1}-1}}\)}

\psfrag{x5}{\Large\(\frac{p_{k+1}}{q_{k+1}}\)}

\psfrag{x6}{ \Large\(\frac{\widehat{p}_{k,2}}{\widehat{q}_{k,2}}\)}

\psfrag{x7}{ \Large\(\frac{\widehat{p}_{k,3}}{\widehat{q}_{k,3}}\)}
\psfrag{a}{micro-intermediate}
\psfrag{b}{intermediate}\includegraphics{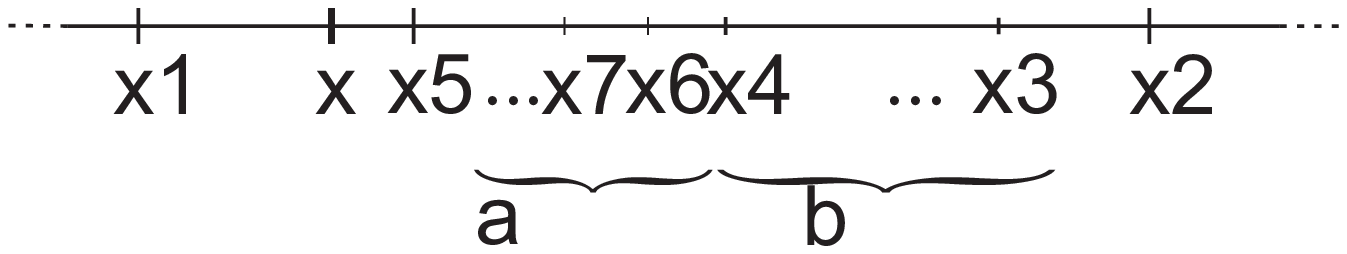}

\caption{\label{cap:Intermediate-and-micro-intermediate} Regular, intermediate,
and micro-intermediate convergents for $x\in\mathcal{U}$ and $k\in\N$
even.}
\end{figure}
(cf. Fig. \ref{cap:Intermediate-and-micro-intermediate}). These are
given for $n\in\N$ by \[
\hat{p}_{k,n}:=np_{k+1}-p_{k}\,\,\textrm{and }\,\,\hat{q}_{k,n}:=nq_{k+1}-q_{k}.\]
 Note that $\hat{p}_{k,1}/\hat{q}_{k,1}=((a_{k+1}-1)p_{k}+p_{k-1})/((a_{k+1}-1)q_{k}+q_{k-1})=p_{k,a_{k+1}-1}/q_{k,a_{k+1}-1}$.
Also, one immediately verifies that the continued fraction expansion
of $\hat{p}_{k,n}/\hat{q}_{k,n}$ is given by \[
\hat{p}_{k,n}/\hat{q}_{k,n}=[a_{1},\ldots,a_{k},a_{k+1}-1,1,n].\]
 Clearly, if $y\in(p_{k+1}/q_{k+1},p_{k,a_{k+1}-1}/q_{k,a_{k+1}-1}]$,
then there exists $l\in\N$ such that $\hat{p}_{k,l+1}/\hat{q}_{k,l+1}<y\leq\hat{p}_{k,l}/\hat{q}_{k,l}$.
Using (\ref{alter}) together with the fact that $Q$ is strictly
increasing, one then immediately obtains the estimate \begin{eqnarray*}
Q(y)-Q(x) & \geq & Q(\hat{p}_{k,l+1}/\hat{q}_{k,l+1})-Q(x)\gg2^{-\sum_{i=1}^{k+1}a_{i}}\left(1-2^{-(l+1)}-2^{-a_{k+2}}\right)\\
 & \geq & 2^{-\sum_{i=1}^{k+1}a_{i}}.\end{eqnarray*}
 Furthermore, in this situation we trivially have \[
y-x\leq p_{k,a_{k+1}-1}/q_{k,a_{k+1}-1}-p_{k}/q_{k}\ll1/(q_{k}q_{k+1}).\]
 Hence, this shows that also in this case we have \begin{eqnarray*}
\frac{Q(y)-Q(x)}{y-x} & \gg & \frac{q_{k}q_{k+1}}{2^{\sum_{j=1}^{k+1}a_{j}}}.\end{eqnarray*}
 Combining the above with (\ref{B}) and the assumption in (i),
it now follows \[
\lim_{y\rightarrow x+}\frac{|Q(x)-Q(y)|}{|x-y|}\gg\lim_{k\rightarrow\infty}
\frac{q_{2k}q_{2k+1}}{2^{\sum_{j=1}^{2k+1}a_{j}}}=\lim_{k\rightarrow\infty}
\frac{\nu_{F}\left([p_{2k}/q_{2k},p_{2k+1}/q_{2k+1})\right)}{\lambda\left([p_{2k}/q_{2k},p_{2k+1}/q_{2k+1})\right)}=\infty.\]
 Clearly, a minor modification of the argument above then also gives
that for the limit from the left we have \[
\lim_{y\rightarrow x-}\frac{|Q(x)-Q(y)|}{|x-y|}\gg\lim_{k\rightarrow\infty}
\frac{q_{2k-1}q_{2}}{2^{\sum_{j=1}^{2k}a_{j}}}=\lim_{k\rightarrow\infty}
\frac{\nu_{F}\left([p_{2k-1}/q_{2k-1},p_{2k}/q_{2k})\right)}{\lambda\left([p_{2k-1}/q_{2k-1},p_{2k}/q_{2k})\right)}=\infty.\]
 Hence, we conclude that $Q'(x)=\infty$, and this finishes the proof
of the assertion in (i).\\
For the proof of (ii), we proceed similar as for (i). 
Namely, let $y\in\mathcal{U}$ be fixed such that $y>x$. Then
there
exist $k\in\N$ and $m\in\{0,\ldots,a_{k+1}-1\}$ such that $p_{k,m+1}/q_{k,m+1}<y\leq p_{k,m}/q_{k,m}$.
For each $m\in \{0,\ldots,a_{k+1}-2\}$, we then have \begin{eqnarray*}
\frac{Q(y)-Q(x)}{y-x} & \ll & \frac{Q(p_{k,m}/q_{k,m})-
Q(x)}{p_{k,m+1}/q_{k,m+1}-x}\\
 & \ll & \frac{(r_{k+1}q_{k}+q_{k-1})((m+1)q_{k}+q_{k-1})
 }{(r_{k+1}-(m+1)) q_{k-1}q_{k}} \, \, \frac{q_{k-1}q_{k}}{2^{m+
 \sum_{j=1}^{k}a_{j}}}\\
 & \ll & \frac{a_{k+1} (m+1) q_{k}^{2}}{2^{m}(a_{k+1}-(m+1))
 q_{k-1} q_{k}} \, \, 
 \frac{q_{k-1}q_{k}}{2^{\sum_{j=1}^{k}a_{j}}}\\
 & \ll & \frac{a_{k+1} (m+1)q_{k}}{2^{m}
 (a_{k+1}-(m+1))}
 \cdot a_{k}  \cdot \frac{q_{k-1}q_{k}}{2^{\sum_{j=1}^{k}a_{j}}}\\
 & \ll & a_{k} \cdot 
 \frac{\nu_{F} \left([p_{k-1}/q_{k-1},p_{k}/
 q_{k})_{\pm}\right)}{\lambda\left([p_{k-1}/q_{k-1},p_{k}/
 q_{k})_{\pm}\right)}.\end{eqnarray*}
For the remaining case  $m=a_{k+1}-1$, we observe
\[ (Q(y)-Q(x))/(y-x) \leq  Q(p_{k,a_{k+1}-1}/q_{k,a_{k+1}-1})-
Q(x) \ll 2^{-\sum_{j=1}^{k+1}a_{j}},\]
and 
\[ y-x \geq 1/(2 q_{k}q_{k+1}).\]
Therefore, also in this case we have
\[ \frac{Q(y)-Q(x)}{y-x}  \ll \frac{q_{k}q_{k+1}}{2^{\sum_{j=1}^{k+1}a_{j}}}
\leq a_{k+1} \cdot   \frac{\nu_{F} \left([p_{k}/q_{k},p_{k+1}/
q_{k+1})_{\pm}\right)}{\lambda\left([p_{k}/q_{k},p_{k+1}/
q_{k+1})_{\pm}\right)}.\]
A similar estimate can be given for $y<x$, and this is left to the 
reader. Clearly, using the assumption in (ii), we can now proceed 
as in the proof of (i), and  this then gives $Q'(x)=0$. This completes 
the proof of the proposition.
\end{proof}

\begin{rem}
\label{rem5.5} Note that the proof of Proposition \ref{proposition}
also shows that the following implication holds. \[
\limsup_{k\rightarrow\infty}\frac{\nu_{F}
\left([p_{k}/q_{k},p_{k+1}/q_{k+1})\right)}{\lambda
\left([p_{k}/q_{k},p_{k+1}/q_{k+1})\right)}=\infty\implies
\limsup_{y\rightarrow x}\frac{Q(x)-Q(y)}{x-y}=\infty.\]
Moreover, note that since \[
\left|Q(x)-Q(p_{k}/q_{k})\right|\leq
2^{1-\sum_{i=1}^{k+1}a_{i}}\mbox{ and }
\left|x-p_{k}/q_{k}\right|\geq\frac{1}{2q_{k}q_{k+1}}, \, \, \hbox{for 
all} \, \, k \in \N,\]
 we also have the  implication:\[
\liminf_{k\rightarrow\infty}\frac{\nu_{F}\left([p_{k}/q_{k},p_{k+1}/
q_{k+1})\right)}{\lambda\left([p_{k}/q_{k},p_{k+1}/q_{k+1})\right)}=0
\implies\liminf_{y\rightarrow x}\frac{Q(x)-Q(y)}{x-y}=0.\]
\end{rem}
For later use, let us also state the following immediate corollary.
\begin{cor}
\label{Corollary4} For $x\in\mathcal{U}$ we have \[
Q'(x)=\infty\,\,\,\hbox{if and only if}\,\,\,\lim_{n\rightarrow
\infty}\nu_{F}(T_{n}(x))/\lambda\left(T_{n}(x)\right)=\infty.\]
\end{cor}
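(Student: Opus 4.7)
For the forward direction, I would simply invoke Lemma \ref{Lemma0}: since $Q'(x) = \infty$ means by definition that $Q'(x)$ exists in the generalised sense, the lemma yields $\lim_{n\to\infty}\nu_F(T_n(x))/\lambda(T_n(x)) = Q'(x) = \infty$ with no further work required. Note that one cannot bypass Lemma \ref{Lemma0} by appealing only to Remark \ref{rem5.5}, which gives merely the limsup of the difference quotient; the full limit is crucial here.

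For the reverse direction, assume the limit in the statement equals $\infty$. After dispatching the rational case by direct inspection, suppose $x = [a_1, a_2, \ldots]$ is irrational. The plan is to reduce the statement to Proposition \ref{proposition}(i) by extracting a suitable subsequence of Stern-Brocot intervals. The geometric key is the classical identification
\[
T_{n_k}(x) = [p_k/q_k, p_{k+1}/q_{k+1})_{\pm} \quad \text{with} \quad n_k := a_1 + a_2 + \cdots + a_{k+1} - 1,
\]
which follows from the standard dictionary between the Stern-Brocot path and the continued fraction expansion of $x$: a run of $a_{k+1}$ consecutive same-direction moves at the appropriate levels successively introduces the intermediate convergents $p_{k,m}/q_{k,m}$ of order $k$ and terminates exactly at the regular convergent $p_{k+1}/q_{k+1}$. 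Since $n_k \to \infty$, the hypothesis passes to this subsequence and yields
\[
\lim_{k\to\infty}\frac{\nu_F\bigl([p_k/q_k, p_{k+1}/q_{k+1})_{\pm}\bigr)}{\lambda\bigl([p_k/q_k, p_{k+1}/q_{k+1})_{\pm}\bigr)} = \lim_{k\to\infty}\frac{\nu_F(T_{n_k}(x))}{\lambda(T_{n_k}(x))} = \infty.
\]
Proposition \ref{proposition}(i) then delivers $Q'(x) = \infty$, completing the reverse implication.

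The only non-trivial ingredient in this plan is the geometric identification displayed above. However, this is a classical feature of the Stern-Brocot tree and is already implicit in the framework used by the paper (cf.\ the proofs of Lemma \ref{Lemma1} and Proposition \ref{proposition}, both of which exploit the interplay between Stern-Brocot refinements, type-changes, and the blocks of intermediate convergents). It can be verified rigorously by a routine induction on $k$, with careful tracking of which endpoint lies to the left and which to the right of $x$ (this depends on the parity of $k$, since odd- and even-order convergents approach $x$ from opposite sides); I would regard this as the main, but still modest, technical obstacle.
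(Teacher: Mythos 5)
Your proposal is correct and follows essentially the same route as the paper: Lemma \ref{Lemma0} for the forward direction, and for the reverse direction the observation that the convergent-bounded intervals form a subsequence of the Stern-Brocot refinement together with an application of Proposition \ref{proposition}(i). The explicit index $n_k = a_1 + \cdots + a_{k+1} - 1$ you work out is a small but useful clarification that the paper leaves implicit.
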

\begin{proof}
The `only if part' of the corollary was obtained in Lemma \ref{Lemma0}.
By noting that the sequence $\left(\nu_{F}\left([p_{k}/q_{k},p_{k+1}/q_{k+1})\right)\,/\,
\lambda\left([p_{k}/q_{k},p_{k+1}/q_{k+1})\right)\right)_{k\in\N}$
is a subsequence of  $\left(\nu_{F}(T_{n}(x))/\lambda\left(T_{n}(x)\right)\right)_{n\in\N}$,
the `if part' of the corollary is an immediate consequence of Proposition
\ref{proposition}. Here, $p_{k}/q_{k}$ refers once more to the $k$-th
convergent of $x$.
\end{proof}

\section{Fractal analysis of the derivative of the Minkowski function}

By Lemma \ref{Lemma1}, the result of \cite{PVB:01} respectively,
the unit interval can be decomposed into pairwise disjoint sets as
follows. \[
\mathcal{U}=\Lambda_{0}\cup\Lambda_{\infty}\cup\Lambda_{\sim},\]
 where \[
\Lambda_{\theta}:=\{ x\in\mathcal{U}:Q'(x)=\theta\}\,\,\hbox{for}\,\,
\theta\in\{0,\infty\},\,\;\;\;\hbox{ and }\,\,
\Lambda_{\sim}:=\mathcal{U}\setminus(\Lambda_{0}\cup\Lambda_{\infty}).\]
 Clearly, by Lemma \ref{Lemma1} we have that
 $\Lambda_{\sim}=\left\{ x\in\mathcal{U}:Q'(x)\,\textrm{does
 not exist and}\,\, Q'(x)\neq\infty\right\} $.\\
 Let us begin our analysis of this decomposition with the following
result.

\begin{prop}
\label{Lemma5} For $s\in(h_{\mathrm{top}},2\log\gamma]$ we have\[
\mathcal{L}(s)\subset\Lambda_{\infty}.\]
 Whereas, for $s\in[0,h_{\mathrm{top}})$ we have \[
\mathcal{L}(s)\subset \Lambda_{0}.\]
\end{prop}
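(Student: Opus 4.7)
The plan is to split according to whether $s$ lies above or below the topological entropy $h_{\mathrm{top}}=\log 2$, and to reduce each case to results already available in the excerpt.

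\textbf{Case} $s\in(h_{\mathrm{top}},2\log\gamma]$. I expect this case to be almost immediate from Corollary~\ref{Corollary4}. The key observation is that $\nu_F(T_n(x))=2^{-n}$ by construction of the measure of maximal entropy, while $\lambda(T_n(x))=e^{-n\ell_n(x)}$ by the very definition of the Stern-Brocot quotient $\ell_n$, so that
\[
\frac{\nu_F(T_n(x))}{\lambda(T_n(x))}=\exp\bigl(n(\ell_n(x)-h_{\mathrm{top}})\bigr).
\]
For $x\in\mathcal{L}(s)$ the exponent tends to $+\infty$ since $\ell_n(x)\to s>h_{\mathrm{top}}$, and hence the ratio diverges. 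Corollary~\ref{Corollary4} then yields $Q'(x)=\infty$, i.e.\ $x\in\Lambda_\infty$.

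\textbf{Case} $s\in[0,h_{\mathrm{top}})$. Here the plan is to verify the hypothesis of Proposition~\ref{proposition}(ii) for every $x\in\mathcal{L}(s)$. The key preliminary step is to identify the Stern-Brocot interval at the distinguished level $n_k:=\sum_{i=1}^k a_i$: starting from $T_{n_{k}-1}(x)=[p_{k-1}/q_{k-1},p_k/q_k)_\pm$ and inserting the mediant $(p_k+p_{k-1})/(q_k+q_{k-1})$, one finds that $T_{n_k}(x)$ has the convergent $p_k/q_k$ as one endpoint and the first intermediate convergent $(p_k+p_{k-1})/(q_k+q_{k-1})$ as the other, so
\[
\lambda(T_{n_k}(x))=\frac{1}{q_k(q_k+q_{k-1})}.
\]
Because $q_{k-1}\leq q_k$, this yields $2\log q_k\leq\log(q_k(q_k+q_{k-1}))\leq 2\log q_k+\log 2$, and dividing by $n_k$ the hypothesis $\ell_{n_k}(x)\to s$ forces $2\log q_k/n_k\to s$; shifting the index also gives $2\log q_{k+1}/n_{k+1}\to s$.

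Combining this asymptotic with the identity
\[
\frac{\nu_F([p_k/q_k,p_{k+1}/q_{k+1})_\pm)}{\lambda([p_k/q_k,p_{k+1}/q_{k+1})_\pm)}=\frac{2q_kq_{k+1}}{2^{n_{k+1}}}
\]
established inside the proof of Proposition~\ref{proposition}, and the elementary inequality $a_{k+1}q_k\leq a_{k+1}q_k+q_{k-1}=q_{k+1}$, one obtains
\[
a_{k+1}\cdot\frac{\nu_F([p_k/q_k,p_{k+1}/q_{k+1})_\pm)}{\lambda([p_k/q_k,p_{k+1}/q_{k+1})_\pm)}\leq\frac{2q_{k+1}^{2}}{2^{n_{k+1}}}=2\exp\bigl(n_{k+1}(2\log q_{k+1}/n_{k+1}-h_{\mathrm{top}})\bigr),
\]
which tends to $0$ since $2\log q_{k+1}/n_{k+1}\to s<h_{\mathrm{top}}$. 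Proposition~\ref{proposition}(ii) then gives $Q'(x)=0$, so $x\in\Lambda_0$. The main (though entirely routine) technical obstacle is the identification of $T_{n_k}(x)$ with the explicit convergent-mediant interval above, which is what allows one to pass from the spectral condition $\ell(x)=s$ to the precise asymptotic $2\log q_k/n_k\to s$; once this is in place, the invocations of Proposition~\ref{proposition}(ii) and Corollary~\ref{Corollary4} are direct.
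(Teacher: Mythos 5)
Your proof is correct and takes essentially the same route as the paper: for $s>h_{\mathrm{top}}$ you pass from $\ell_n(x)\to s$ to $\nu_F(T_n(x))/\lambda(T_n(x))\to\infty$ and invoke Corollary~\ref{Corollary4}, and for $s<h_{\mathrm{top}}$ you verify the hypothesis of Proposition~\ref{proposition}(ii). The only difference is cosmetic: where the paper cites \cite[Proposition 2.1]{KesseboehmerStratmann:07} for the asymptotic $\lim_k 2\log q_k/\sum_{i\le k}a_i=\lim_n\ell_n(x)$, you rederive it directly by identifying $T_{n_k}(x)$ with the interval bounded by $p_k/q_k$ and its first intermediate convergent.
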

\begin{proof}
Let $x\in\mathcal{L}(s)$ be given. By definition of $\mathcal{L}(s)$,
we then have\[
\lim_{n\rightarrow\infty}\ell_{n}(x)=s.\]
 Hence, for each $\epsilon>0$ there exists $N_{\epsilon}\in\N$ such
that \[
n(s-\epsilon)\leq\log\left(1/\lambda\left(T_{n}(x)\right)\right)\leq n(s+\epsilon),\,\,\,
\textrm{ for all }\,\,\, n\geq N_{\epsilon}.\]
 From this we immediately deduce that \begin{eqnarray}
e^{-n(s+\epsilon-h_{\mathrm{top}})}\leq2^{n}\,\lambda\left(T_{n}(x)\right)\leq e^{-n(s-\epsilon
-h_{\mathrm{top}})},\,\,\,\textrm{ for all }\,\,\, n\geq N_{\epsilon}.\label{est}\end{eqnarray}
 For $s\in(h_{\mathrm{top}},2\log\gamma]$, this implies $\lim_{n\rightarrow\infty}\lambda
 \left(T_{n}(x)\right)/\nu_{F}(T_{n}(x))
 =\lim_{n\rightarrow\infty}2^{n}\,\lambda\left(T_{n}(x)\right)=0$.
By Corollary \ref{Corollary4}, we then have that $Q'(x)=\infty$,
and hence $x\in\Lambda_{\infty}$. This finishes the proof of the
first part of the proposition.\\
For the second part, let $s\in[0,h_{\mathrm{top}})$ and 
 $x=[a_1,a_2,\ldots ]\in\mathcal{L}(s)$
be fixed. Let $q_{n}$ refer  to the denominator
of the $n$-th convergent $p_{n}/q_{n}:=[a_{1},a_{2},\ldots,a_{n}]$ 
of $x$. We then have
\[
\lim_{n \to \infty} \frac{\log(a_{n}q_n q_{n-1})}{\sum_{j=1} ^n a_j}=
\lim_{n \to \infty} \frac{\log(q_n q_{n-1})}{\sum_{j=1} ^n a_j}=
\lim_{n \to \infty} \ell_n(x) <h_{\mathrm{top}}.
\]
Here,  the last equality is a consequence of
\cite[Proposition 2.1]{KesseboehmerStratmann:07}. 
Similar to the above, a straight forward calculation then shows that
$\lim_{n \to \infty} (a_n q_n q_{n-1})/2^{\sum_{j=1}^n a_j}=0$. 
Using the second part of Proposition
\ref{proposition}, it follows $Q'(x)=0$. 
\end{proof}
Note that an immediate consequence of  Proposition \ref{Lemma5} 
is that the essential support of $\nu_F$ is contained
    in $\Lambda_\infty$.
Moreover, by combining  Proposition \ref{Lemma5} and
 Remark \ref{rem5.5} we immediately obtain
the following corollary. Here, $q_{n}$ refers once more
to the denominator
of the $n$-th convergent $p_{n}/q_{n}:=[a_{1},a_{2},\ldots,a_{n}]$
of $x=[a_{1},a_{2},\ldots]$.

\begin{cor}\label{Corollary6} 
For $x\in\mathcal{U}$ the following hold.
\begin{itemize}
\item[(i)] If $\lim_{n\rightarrow\infty}\frac{1}{2\log q_{n}}\sum_{i=1}^{n}a_{i}<1/h_{\mathrm{top}}$,
then $x\in\Lambda_{\infty}$.
\item[(ii)] If $\lim_{n\rightarrow\infty}\frac{1}{2\log q_{n}}\sum_{i=1}^{n}a_{i}>1/h_{\mathrm{top}}$, then $x\in\Lambda_{0}$.
\item[(iii)] If $\limsup_{n\rightarrow\infty}\frac{1}{2\log q_{n}}\sum_{i=1}^{n}a_{i}>1/h_{\mathrm{top}}$
and  $\liminf_{n\rightarrow\infty}\frac{1}{2\log q_{n}}\sum_{i=1}^{n}a_{i}<1/h_{\mathrm{top}}$, then $x\in\Lambda_{\sim}$.
\end{itemize}
\end{cor}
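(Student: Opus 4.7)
The plan is to translate each hypothesis into a condition on either $\ell(x)$ (for parts (i), (ii)) or on the Stern-Brocot ratio along the convergent subsequence (for (iii)), and then to apply Proposition \ref{Lemma5} or Remark \ref{rem5.5} directly. The bridge is the identity used in the proof of Proposition \ref{Lemma5}(ii), namely $\lim_n \log(q_n q_{n-1})/\sum_{i=1}^n a_i = \lim_n \ell_n(x)$, together with the elementary observation $\log(q_n q_{n-1}) = 2\log q_n + O(\log a_n)$, whose $O$-term is swallowed after division by $\sum_{i=1}^n a_i$ since $\log a_n \leq \log(\sum_{i\leq n} a_i) = o(\sum_{i\leq n} a_i)$; the same estimate transfers $\limsup$ and $\liminf$ intact.

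For (i), the hypothesis rewrites as $\lim_n (2\log q_n)/\sum_{i=1}^n a_i > h_{\mathrm{top}}$, and the bridge yields $\ell(x) > h_{\mathrm{top}}$. Since $\ell(x) \leq 2\log\gamma$ has been established earlier, it follows that $x \in \mathcal{L}(\ell(x))$ with $\ell(x) \in (h_{\mathrm{top}}, 2\log\gamma]$, and Proposition \ref{Lemma5} immediately gives $x \in \Lambda_\infty$. Part (ii) is entirely symmetric: the hypothesis yields $\ell(x) < h_{\mathrm{top}}$, so $x \in \mathcal{L}(\ell(x)) \subset \Lambda_0$ by the second clause of Proposition \ref{Lemma5}.

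For (iii), I invoke equation (\ref{B}) together with Remark \ref{rem5.5}. The first hypothesis, $\limsup_n \sum_{i=1}^n a_i/(2\log q_n) > 1/h_{\mathrm{top}}$, produces a subsequence along which $\log(q_k q_{k+1})/\sum_{i=1}^{k+1} a_i < h_{\mathrm{top}} - \epsilon$ for some $\epsilon > 0$; by (\ref{B}) this forces $\liminf_k \nu_F([p_k/q_k, p_{k+1}/q_{k+1})_\pm)/\lambda([p_k/q_k, p_{k+1}/q_{k+1})_\pm) = 0$, and Remark \ref{rem5.5} then gives $\liminf_{y \to x}(Q(x)-Q(y))/(x-y) = 0$, so in particular $Q'(x) \neq \infty$. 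Symmetrically, the second hypothesis $\liminf_n \sum_{i=1}^n a_i/(2\log q_n) < 1/h_{\mathrm{top}}$ leads via (\ref{B}) and the first implication of Remark \ref{rem5.5} to $\limsup_{y \to x}(Q(x)-Q(y))/(x-y) = \infty$, ruling out every finite value of $Q'(x)$. Combining the two, $Q'(x)$ exists in neither the ordinary nor the generalised sense, and hence $x \in \Lambda_\sim$.

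The only real obstacle is the bookkeeping needed to pass cleanly from the $2\log q_n$ of the hypothesis to the $\log(q_n q_{n-1})$ appearing both in equation (\ref{B}) and in the proof of Proposition \ref{Lemma5}, while preserving $\limsup$ and $\liminf$ alongside ordinary limits. The $o$-estimate $\log a_n = o(\sum_{i \leq n} a_i)$ secures this uniformly; once it is in place, the argument reduces to direct appeals to results already established in the paper.
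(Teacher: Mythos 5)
Your proposal is correct and follows essentially the same route as the paper, which simply states that Corollary~\ref{Corollary6} is obtained ``by combining Proposition~\ref{Lemma5} and Remark~\ref{rem5.5}'': you apply Proposition~\ref{Lemma5} via the bridge $\lim_n \log(q_nq_{n-1})/\sum_{i\leq n}a_i=\lim_n\ell_n(x)$ for parts (i) and (ii), and equation~(\ref{B}) together with Remark~\ref{rem5.5} for part (iii), with the $o$--estimate $\log a_n=o\bigl(\sum_{i\leq n}a_i\bigr)$ correctly noted as the device that transfers $\limsup$ and $\liminf$ from $2\log q_n$ to $\log(q_nq_{n-1})$.
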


\begin{rem}
Note that a similar type of result was obtained in \cite{PVB:01}.
Namely, on the basis of the assumption that $Q'(x)$ exists in the
generalised sense, the following hold. 
\begin{itemize}
\item[(i)]  \textit{If $\limsup_{n\rightarrow\infty}\frac{1}{n}\sum_{i=1}^{n}a_{i}<2\log\gamma\,\,(=1.3884\ldots)$,
then $x\in\Lambda_{\infty}$. }
\item[(ii)]  \textit{If $\liminf_{n\rightarrow\infty}\frac{1}{n}\sum_{i=1}^{n}a_{i}>\rho\,\,(=5.3197\ldots)$,
then $x\in\Lambda_{0}$.}\\
 \textit{(Here, $\rho$ is given implicitly by $(1+\rho)^{1/\rho}=\sqrt{2}$). }
\end{itemize}
\end{rem}

For the following proposition, let  $N:\mathcal{U} \rightarrow \N$
be given by  $N([a_{1},a_{2},\ldots]):= a_{1}$, and let $I:
\mathcal{U} \rightarrow \R$ refer to the potential function
which is given by  $I(x):=\log\left|G'(x)\right|$, with $G$
denoting the Gauss map. For $0< s	< t	< \infty$, we
then define the sets 
\begin{eqnarray*}
\mathcal{L}^{*}(s)&:=&\left\{ x\in\mathcal{U}:\limsup_{n\rightarrow\infty}\frac{S_{n}I(x)}{S_{n}N(x)} \geq s\right\},\;\;
\mathcal{L}_{*}(s):=\left\{ x\in\mathcal{U}:\liminf_{n\rightarrow\infty}\frac{S_{n}I(x)}{S_{n}N(x)} \geq s\right\},\\
 \mathcal{L}\left(s,t\right)&:=&
\left\{ x\in\mathcal{U}:\liminf_{n \to \infty}\frac{S_{n}I(x)}{S_{n}N(x)}\leq
s ,\limsup_{n \to \infty}\frac{S_{n}I(x)}{S_{n}N(x)}\geq t\right\} ,
\end{eqnarray*}
where $S_{n}\phi(x):= \sum_{k=0}^{n-1} \phi (G^k(x))$ refers to the
$n$-th Birkhoff sum of a function $\phi$. Moreover, for
$x=[a_{1},a_{2},\ldots]\in \mathcal{U}$ and $n \in \N$, we use the
notation $C_{n}(x):= \{[b_{1},b_{2},\ldots]\in \mathcal{U}:
b_{i}=a_{i}, \,\hbox{for all} \, \,  i \in \{1,\ldots,n\}\}$ to denote
the unique $n$-cylinder containing $x$.

\begin{prop}
\label{Lemma6}~
\begin{itemize}
\item [(i)] For each  $  s \in \left[0,2\log\gamma\right] $, we have\[
\dim_{H}(\mathcal{L}_{*}\left(s\right))=\dim_{H}(\mathcal{L}^{*}\left(s\right))
=\dim_{H}(\mathcal{L}\left(s\right)).\]
\item [(ii)]For each $0<s_{0}\leq s_{1}\leq 2\log\gamma$, we have \[
\dim_{H}(\mathcal{L}\left(s_{0},s_{1})\right)=\dim_{H}(\mathcal{L}(s_{1}).\]
\end{itemize}
\end{prop}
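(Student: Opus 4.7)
The plan is to transport everything to the Stern--Brocot growth rate $\ell$ of Theorem \ref{KS} and then to run a Moran-type construction of non-typical points in the spirit of \cite{BS00}. For any irrational $x\in\mathcal{U}$, bounded distortion for the Gauss map $G$ gives $S_nI(x)=\log|(G^n)'(x)|=2\log q_n+O(1)$, while $S_nN(x)=\sum_{i=1}^n a_i$ diverges. Using the identity $\ell_n(x)=\log(q_nq_{n-1})/\sum_{i=1}^n a_i+o(1)$ from \cite[Proposition 2.1]{KesseboehmerStratmann:07} (already invoked in the proof of Proposition \ref{Lemma5}), one checks that $S_nI(x)/S_nN(x)$ and $\ell_n(x)$ share the same liminf and limsup for every $x$. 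In particular $\mathcal{L}(s)$ coincides up to a Hausdorff-null set with the level set of Theorem \ref{KS}, and $\dim_H(\mathcal{L}(s))=d(s)$.

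For (i), the chain $\mathcal{L}(s)\subseteq\mathcal{L}_*(s)\subseteq\mathcal{L}^*(s)$ gives the two easy inequalities $\dim_H(\mathcal{L}(s))\leq\dim_H(\mathcal{L}_*(s))\leq\dim_H(\mathcal{L}^*(s))$. The remaining bound $\dim_H(\mathcal{L}^*(s))\leq d(s)$ is the standard multifractal upper bound: write
\[
\mathcal{L}^*(s)\,\subseteq\,\bigcap_{\varepsilon>0}\bigcap_{N\in\N}\bigcup_{n\geq N}\bigl\{x\in\mathcal{U}:S_nI(x)/S_nN(x)\geq s-\varepsilon\bigr\}
\]
and cover the inner set by Stern--Brocot intervals of depth $m\asymp S_nN(x)$ whose Lebesgue measure is at most $\mathrm{e}^{-m(s-2\varepsilon)}$ (the distortion coming from the last partial quotient is absorbed into a negligible exceptional set). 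A pressure computation using (\ref{pressure}) together with $d(s)=-\widehat{P}(-s)/s$ then shows that the $t$-dimensional Hausdorff measure of this cover vanishes for every $t>d(s)$; letting $\varepsilon\to 0$ closes the argument.

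For (ii), the upper bound $\dim_H(\mathcal{L}(s_0,s_1))\leq d(s_1)$ is immediate from $\mathcal{L}(s_0,s_1)\subseteq\mathcal{L}^*(s_1)$ combined with part (i). The real content is the matching lower bound. The strategy is a Cantor-like construction: fix $\varepsilon>0$ and, using the variational principle behind Theorem \ref{KS}, take a Bernoulli-type family of Stern--Brocot cylinders realizing dimension $d(s_1)-\varepsilon$ at rate $s_1$, together with a small (possibly deterministic) family realizing rate $s_0$. Inductively alternate long $s_1$-blocks of Stern--Brocot depth $N_k$ with much longer $s_0$-blocks of depth $M_k$, arranged so that $N_k\gg\sum_{i<k}(N_i+M_i)$ and $M_k\gg N_k+\sum_{i<k}(N_i+M_i)$; the cumulative ratio $S_nI/S_nN$ then swings between a value within $\varepsilon$ of $s_1$ at each $s_1$-block endpoint and within $\varepsilon$ of $s_0$ at each $s_0$-block endpoint, so every limit point lies in $\mathcal{L}(s_0,s_1)$. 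Distributing mass uniformly through the $s_1$-blocks, and forcing enough extra entropy into the $s_0$-blocks so that the local dimension stays uniformly close to $d(s_1)-\varepsilon$, the mass distribution principle yields Hausdorff dimension at least $d(s_1)-2\varepsilon$. Letting $\varepsilon\to 0$ completes the proof.

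The main obstacle will be executing this Moran construction inside the non-uniformly hyperbolic Farey system. Large partial quotients in either block type spoil the usual bounded-distortion estimates, and the length of a block measured in Stern--Brocot depth can differ substantially from its length measured in continued-fraction depth. Both the selection of blocks and the subsequent dimension bookkeeping therefore have to be organised at the level of Stern--Brocot intervals rather than Gauss cylinders, so that the geometric scale $\lambda(T_n(x))$ and the symbolic depth can be controlled simultaneously. This is exactly the point where the Stern--Brocot multifractal formalism of \cite{KesseboehmerStratmann:07} is needed in order to extend the hyperbolic arguments of \cite{BS00} to the present non-uniformly hyperbolic setting.
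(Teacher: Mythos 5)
Your overall strategy for (ii) is the right one --- an alternating Moran construction in the spirit of \cite{BS00}, using Egorov-type control and the mass distribution principle --- and it matches the paper's. For (i) you reproduce the covering argument underlying \cite[Lemma~5.4]{KesseboehmerStratmann:07}, which the paper simply cites with the remark that $\liminf$ may be replaced by $\limsup$; that is fine. However, in (ii) there is a genuine conceptual gap in how the $s_{0}$-blocks are populated.

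You propose to use ``a small (possibly deterministic) family realizing rate $s_{0}$'' and then to compensate by ``forcing enough extra entropy into the $s_{0}$-blocks so that the local dimension stays uniformly close to $d(s_{1})-\varepsilon$.'' A deterministic $s_{0}$-block carries no entropy, so across such a block $-\log\mu(C_{n}(x))$ stalls while $S_{n}I(x)$ keeps growing, and the local dimension collapses --- precisely the failure mode you need to rule out. The resolution is not some ad hoc injection of entropy: it is to distribute mass on the $s_{0}$-blocks according to the Gibbs measure $\mu_{0}$ for the potential $-P(t(s_{0}))I-t(s_{0})N$, exactly as for the $s_{1}$-blocks. The crucial structural fact making this work is that $\dim_{H}(\mu_{0})=d(s_{0})\geq d(s_{1})=\dim_{H}(\mu_{1})$ because $s_{0}\leq s_{1}$ and $d$ is strictly decreasing on $[0,2\log\gamma]$. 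Hence the $s_{0}$-blocks can only \emph{increase} the ratio $-\log\mu(C_{n}(x))/S_{n}I(x)$ relative to $\dim_H(\mu_1)$, and the lower bound $d(s_{1})$ survives. Without identifying this monotonicity mechanism your construction does not close.

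Two smaller points. First, the ``obstacle'' paragraph is misdirected: the paper's bookkeeping is done entirely at the level of Gauss cylinders $C_{n}(x)$ and Birkhoff sums of $I$ and $N$, where the Gauss map is uniformly expanding and has uniformly bounded distortion. The non-uniform hyperbolicity of the Farey system never enters the Moran construction itself; it is absorbed once and for all into the thermodynamic formalism (the unboundedness of $N$) used to produce the Gibbs measures $\mu_{0},\mu_{1}$. Reorganising the construction around Stern--Brocot depth, as you suggest, is an unnecessary complication. Second, if you fix $\varepsilon>0$ throughout the construction you only land in the set where $\liminf\leq s_{0}+\varepsilon$ and $\limsup\geq s_{1}-\varepsilon$, which is not contained in $\mathcal{L}(s_{0},s_{1})$. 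You must let the tolerance shrink with the generation index; the paper does this via Egorov's theorem, taking sets $\Gamma_{k}$ of $\mu_{\theta(k)}$-measure $\geq 1-2^{-(k+1)}$ on which the Birkhoff-average and local-dimension errors are $<k^{-1}$ beyond depth $m_{k}$, then concatenating with rapidly growing block lengths $n_{k}=n_{k-1}(m_{k}+1)$.
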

\begin{proof}
{\em ad (i).} The inequality $\dim_{H}(\mathcal{L}_{*}
\left(s\right))\leq \dim_{H}(\mathcal{L}^{*}\left(s\right))$ 
follows immediately from $\mathcal{L}_{*}
\left(s\right)\subset\mathcal{L}^{*}\left(s\right)$. 
For the proof of the upper estimate  $\dim_{H}
(\mathcal{L}^{*}\left(s\right))\leq -\widehat{P}\left(-s\right)/s$
we refer to \cite[Lemma~5.4]{KesseboehmerStratmann:07}. Note that
in \cite{KesseboehmerStratmann:07} we in fact considered the set
$\mathcal{L}_{*}(s)$,
rather than the set $\mathcal{L}^{*}(s)$. However, one immediately
sees that in the proof of
\cite[Lemma~5.4]{KesseboehmerStratmann:07}
`$\liminf$' can be replaced by `$\limsup$'. 
Using Theorem \ref{KS} and the fact that $\mathcal{L}\left(s\right)\subset\mathcal{L}_{*}\left(s\right)$, then gives rise
to the statement in (i).

{\em ad (ii).}
Since  "$\leq$" is a 
direct consequence of (i), we only have to show "$\geq$". 
Using standard techniques from geometric measure theory 
(cf. e.g. \cite{M}), it is sufficient to show 
that there exists   a probability measure $\mu$ such that
\begin{itemize}
\item [(A)] $\mu\left(\mathcal{L}\left(s_{0},s_{1}\right)\right)>0$,
\item [(B)] ${\displaystyle \liminf_{n \to \infty}\frac{-\log \mu
\left(C_{n}(x)\right)}{S_{n}I
(x)}\geq\dim_{H}\left(\mathcal{L}\left(s_{1}\right)\right)}$,
for $\mu$-almost every $x \in \mathcal{U}$.
\end{itemize}
For this, let us first recall the following outcome
of the thermodynamic formalism  of \cite{KesseboehmerStratmann:07}.
For $i=0,1$,  let $\mu_{i}$ be the Gibbs measures on $\mathcal{U}$ for
the potential function
$-P\left(t\left(s_{i}\right)\right)I-t\left(s_{i}\right)N$, for
$P$ denoting the pressure function defined in (\ref{pressure}), and $t$ the inverse function of $P'$ (we refer to
\cite[Proposition 4.2]{KesseboehmerStratmann:07} for the details).
For these measures it was shown in \cite{KesseboehmerStratmann:07}  that 
$\int I\, d\mu_{i}/\int N\, d\mu_{i}=s_{i}$,
$h_{\mu_{i}}/\int I\, d\mu_{i}=\dim_{H}\left(\mu_{i}\right)=\dim_{H}
\left(\mathcal{L}\left(s_{i}\right)\right)$,
as well as\begin{equation}
\lim_{n \to \infty} \frac{S_{n}I(x)}{n}=\int I\,
d\mu_{i}\in\left(0,\infty\right)\: \, \textrm{and} \, \,
\lim_{n \to \infty}
\frac{-\log\mu_{i}\left(C_{n}(x)\right)}{n}=h_{\mu_{i}},\;\:\, 
\textrm{ for $\mu_{i}$-almost every $x \in \mathcal{U}$.}\label{eq:ergodprop}\end{equation}
For ease of exposition, let us put $\theta(k):\equiv k\mod(2)$.  
Using Egorov's Theorem, it follows that there exists an increasing 
sequence $\left(m_{k}\right)_{k \in \N}$
and a sequence  $\left(\Gamma_{k}\right)_{
k \in\N}$ of Borel subsets of $\mathcal{U}$, such that  we have 
${\displaystyle \mu_{\theta(k)}\left(\Gamma_{k}\right)\geq1-2^{-(k+1)}}$,
and such that  for all $x \in \Gamma_{k}$
and $n \geq m_{k}$,
\begin{itemize}
\item ${\displaystyle \left|\frac{S_{n}I(x)}{n}-\int I\, d\mu_{\theta(k)}
\right|<k^{-1}}$,
\item ${\displaystyle \left|\frac{-\log\mu_{\theta(k)}
\left(C_{n}(x)\right)}{n}-h_{\mu_{\theta(k)}}\right|<k^{-1}}$,
\item ${\displaystyle \frac{-\log\mu_{\theta(k)}(C_{n}(x))}{S_{n}
I\left(x\right)}>
\dim_{H}\left(\mu_{\theta(k)}\right)-k^{-1}}$.
\end{itemize} Define $n_{0}:=1+1/m_{1}$ and let
$n_{k}:=\prod_{i=1}^{k}\left(m_{i}+1\right)$, for each $k \in \N$.
Then define the countable family of
cylinder sets \[
\mathcal{C}_{k}:=\left\{ C_{n_{k-1}m_{k}}(x):x\in\Gamma_{k}\right\}
,\; \textrm{for each} \, \, k \in \N.\]
 This allows to introduce another family $\left(\mathcal{D}_{k}\right)_{k \in
 \N}$ of cylinder sets as follows.  Let $\mathcal{D}_{1}:=\mathcal{C}_{1}$,
 and for $k \geq2$ define \[
\mathcal{D}_{k}:=\left\{ CD :C\in\mathcal{D}_{k-1},D\in\mathcal{C}_{k}\right\} ,\]
 where $CD$ denotes the concatenation of the cylinders $C$ and $D$.
By construction, we have  that each cylinder set in $\mathcal{D}_{k}$
has length equal to $n_{k}$, for each $k \in\N$.  We can then define the set\[
\mathcal{M}:=\bigcap_{k \in \N}\bigcup_{D \in\mathcal{D}_{k}}D.\]
One immediately verifies that $\mathcal{M}$ is non-empty.
Next, using Kolmogorov's consistency theorem, we define the Cantor-measure $m$ on $\mathcal{M}$, by  setting $m(C):=\mu_{1}(C)$
if $C\in\mathcal{D}_{1}$, and for $C=D'C'\in\mathcal{D}_{k}$ such
that
$D'\in\mathcal{D}_{k-1}$ and $C'\in\mathcal{C}_{k}$, we let \[
m(C):=m(D')\mu_{\theta(k)}(C').\]
 Clearly,  $m$ admits an extension $\mu$ to $\mathcal{U}$, and this is
 given by  $\mu(A):=m(A\cap\mathcal{M})$,
for each  $A\subset\mathcal{U}$ measurable. By construction
we then have that
\[
\mu(\mathcal{M})\geq\prod_{k \in \N}\left(1-2^{-k}\right)>0.\]
Since $I$ is H\"older continuous, we obtain  for $x\in C\in\mathcal{D}_{k}$,
 \begin{eqnarray*}
\left|\frac{S_{n_{k}}I\left(x\right)}{n_{k}}\right| & \leq &
\frac{1}{m_{k}+1}\left|\frac{1}{n_{k-1}}S_{n_{k-1}}I \left(x\right)\right|
+\frac{m_{k}}{m_{k}+1}\left|\frac{1}{n_{k-1}m_{k}}S_{n_{k-1}m_{k}} I
\left(G^{n_{k-1}}x\right)\right|.\end{eqnarray*}
Using this, a straightforward  inductive argument then gives that $S_{n_{k}}
 I (x)/ n_{k}$
is bounded, and  hence, \[
\lim_{k \to \infty} \left|\frac{S_{n_{k}} I \left(x\right)}{n_{k}}-\int  I \, d\mu_{\theta(k)}\right| =0.\]
This shows that $\mathcal{M}\subset\mathcal{L}\left(s_{0},s_{1}\right)$,
and thus the assertion in (A) follows.

For the proof of (B), first note that an  argument  similar to the one
just given,
 shows  \[
\lim_{k \to \infty} \left|\frac{-\log\left(\mu \left(C_{n_{k}}\left(x\right)\right)\right)}
{n_{k}}-h_{\mu_{\theta(k)}}\right|= 0.\]
Then note that $C_{n_{k}}\left(x\right)=C_{n_{k-1}}\left(x\right)C_{m_{k}n_{k-1}}\left(G^{n_{k-1}}x\right)$, 
for each  $x\in\mathcal{M}$
and $k \in\N$. Using this, it follows

 ${\displaystyle \frac{-\log\left(\mu \left(C_{n_{k}}\left(x\right)\right)\right)}{S_{n_{k}}I\left(x\right)}}$
 \begin{eqnarray*}
 & = & \frac{n_{k-1}}{n_{k}}\frac{\frac{S_{n_{k-1}}I\left(x\right)}{n_{k-1}}}{\frac{S_{n_{k}}I\left(x
 \right)}{n_{k}}}\cdot\frac{-\log\left(\mu \left(C_{n_{k-1}}\left(x\right)\right)
 \right)}{n_{k-1}}+\frac{m_{k}n_{k-1}}{n_{k}}\frac{\frac{S_{m_{k}n_{k-1}}I
 \left(G^{n_{k-1}}x\right)}{m_{k}n_{k-1}}}{\frac{S_{n_{k}}I\left(x\right)}{n_{k}}}\cdot\frac{-\log\left(
 \mu_{\theta(k)}\left(C_{m_{k}n_{k-1}}\left(G^{n_{k-1}}x\right)\right)\right)}{S_{m_{k}
 n_{k-1}}I\left(G^{n_{k-1}}x\right)}\\
 & = & \frac{1}{m_{k}+1}\underbrace{\frac{S_{n_{k-1}}I\left(x\right)/n_{k-1}}{S_{n_{k}}
 I\left(x\right)/n_{k}}\cdot\frac{-\log\left(\mu\left(C_{n_{k-1}}
 \left(x\right)\right)\right)}{n_{k-1}}}_{\textrm{bounded}}\\
&& \;\;\;\;\;\;\;\;\;\;\;\;\;\;\;\;\;\;\;\;\;\;\;\;\;\;\;\;\;\;\;\;\;\;\;\;\;\;\;\;\;\;\;\;\;\;\;\;\;\;\;\;\;\;\;\;\;\;\;\;\;\;\;\;\;\;\;
+\frac{m_{k}}{m_{k}+1}
 \underbrace{\frac{\frac{S_{m_{k}n_{k-1}}I\left(G^{n_{k-1}}x\right)}{m_{k}
 n_{k-1}}}{\frac{S_{n_{k}}I\left(x\right)}{n_{k}}}}_{\to1}\cdot\frac{-\log
 \left(\mu_{\theta(k)}\left(C_{m_{k}n_{k-1}}\left(G^{n_{k-1}}x\right)\right)\right)}{S_{m_{k}n_{k-1}}I
 \left(G^{n_{k-1}}x\right)}.\end{eqnarray*}
 This implies that
 \begin{equation}
\liminf_{k \to \infty}\frac{-\log\left(\mu \left(C_{n_{k}}\left(x\right)\right)\right)}{S_{n_{k}}I\left(x\right)}
\geq\dim_{H}\left(\mu_{1}\right).\label{eq:Entropy}\end{equation}
Also,  for
$n_{k}\leq n<n_{k}+m_{k}$
we immediately  obtain \[
\frac{-\log\left(\mu \left(C_{n}\left(x\right)\right)\right)}{S_{n}I\left(x\right)}\geq
\frac{-\log\left(\mu \left(C_{n_{k}}\left(x\right)\right)\right)}{n_{k}}\frac{n_{k}}{n_{k}+m_{k}}.\]
Finally,  if $n_{k}+m_{k} \leq n < n_{k+1}$  then   $C_{n}\left(x\right)=DB$,
for some $D\in \mathcal{D}_{k}$ and for some cylinder set $B$ of
length at least $m_{k}$ such that  $B$ contains some cylinder set $C
\in \mathcal{C}_{k+1}$.  We then have by
construction that $\mu \left(DC\right)\leq \mu
\left(D\right)\mu_{\theta(k+1)}\left(C\right)$. Using this, it
follows that for each $\epsilon>0$ and $n$ sufficiently large,\begin{eqnarray*}
\frac{-\log\left(\mu\left(C_{n}\left(x\right)\right)\right)}{S_{n}I\left(x\right)} & \geq & \frac{-\log\left(\mu
\left(C_{n_{k}}\left(x\right)\right)\right)-\log\mu_{\theta(k+1)}\left(C_{\left|B\right|}\left(G^{n_{k}}
\left(x\right)\right)\right)}{S_{n}I\left(
x\right)}\\
 & \geq & \frac{\left(\dim_{H}\left(\mu_{1}\right)-\epsilon\right)S_{n_{k}}I\left(x\right)+
 \left(\dim_{H}\left(\mu_{1}\right)-\epsilon\right)S_{\left|B\right|}I\left(G^{n_{k}}\left(x\right)
 \right)}{S_{n}I\left(x\right)}\\
 & = & \dim_{H}\left(\mu_{1}\right)-\epsilon.\end{eqnarray*}
 By combining the two latter inequalities,  the assertion in  (B) follows.
\end{proof}
\begin{rem}
Note that the proof of Proposition \ref{Lemma6} (ii) was inspired by the  argument  in
\cite[Theorem 6.7 (3)]{BS00}. However, the considerations in \cite{BS00} are restricted
to expanding dynamical systems, whereas the dynamical system in
Proposition \ref{Lemma6} is expansive. Hence, the proof of Proposition \ref{Lemma6} (ii) 
can be considered as giving a partial extension of the result in \cite{BS00}.
\end{rem}

The following theorem gives the main result of this paper.
\begin{thm}
\label{Theorem} For the Hausdorff dimensions of $\Lambda_{\infty}$
and $\Lambda_{\sim}$ we have \[
\dim_{H}(\Lambda_{\sim})=\dim_{H}\left(\Lambda_{\infty}\right)=\dim_{H}\left(\mathcal{L}(h_{\mathrm{top}})\right).\]
\end{thm}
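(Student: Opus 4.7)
The theorem asserts two equalities, which I would obtain by matching upper and lower bounds. All the ingredients are at hand: Proposition \ref{Lemma5}, Proposition \ref{Lemma6}, Corollary \ref{Corollary4}, Remark \ref{rem5.5}, Proposition \ref{proposition}, and Theorem \ref{KS}.

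For $\Lambda_\infty$, the upper bound follows from the inclusion $\Lambda_\infty\subset\mathcal{L}_*(h_{\mathrm{top}})$: Corollary \ref{Corollary4} forces any $x\in\Lambda_\infty$ to satisfy $\lim_n 2^{-n}/\lambda(T_n(x))=\infty$, hence $\liminf_n\ell_n(x)\geq h_{\mathrm{top}}$, which via the identification provided by Proposition~2.1 of \cite{KesseboehmerStratmann:07} translates into $\liminf_n S_nI(x)/S_nN(x)\geq h_{\mathrm{top}}$. Proposition \ref{Lemma6}(i) then yields $\dim_H\Lambda_\infty\leq d(h_{\mathrm{top}})$. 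The matching lower bound is immediate from Proposition \ref{Lemma5}: since $\mathcal{L}(s)\subset\Lambda_\infty$ for all $s\in(h_{\mathrm{top}},2\log\gamma]$, continuity of $d$ from Theorem \ref{KS} gives $\dim_H\Lambda_\infty\geq\lim_{s\searrow h_{\mathrm{top}}}d(s)=d(h_{\mathrm{top}})$.

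For the lower bound on $\Lambda_\sim$, I would show $\mathcal{L}(s_0,s_1)\subset\Lambda_\sim$ for every $0<s_0<h_{\mathrm{top}}<s_1\leq 2\log\gamma$. Along the subsequences witnessing $\liminf\leq s_0$ and $\limsup\geq s_1$, the ratio $\nu_F(T_n(x))/\lambda(T_n(x))$ decays exponentially to $0$ and blows up to $\infty$ respectively; both implications of Remark \ref{rem5.5} then give $\liminf_{y\to x}(Q(y)-Q(x))/(y-x)=0$ and $\limsup_{y\to x}(Q(y)-Q(x))/(y-x)=\infty$, ruling out the existence of $Q'(x)$ in the generalised sense, so $x\in\Lambda_\sim$. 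Proposition \ref{Lemma6}(ii) identifies $\dim_H\mathcal{L}(s_0,s_1)=\dim_H\mathcal{L}(s_1)$, and letting $s_1\searrow h_{\mathrm{top}}$ with continuity of $d$ yields $\dim_H\Lambda_\sim\geq d(h_{\mathrm{top}})$.

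The main obstacle is the matching upper bound $\dim_H\Lambda_\sim\leq d(h_{\mathrm{top}})$. My approach is to prove the set inclusion $\Lambda_\sim\subset\mathcal{L}^*(h_{\mathrm{top}})$; Proposition \ref{Lemma6}(i) then delivers the bound. Arguing by contrapositive, suppose $\limsup_n\ell_n(x)<h_{\mathrm{top}}$ and fix $\epsilon>0$ with $\ell_n(x)\leq h_{\mathrm{top}}-\epsilon$ for $n\geq N$. The Stern-Brocot atom containing $x$ whose boundary rationals are consecutive convergents coincides with $[p_k/q_k,p_{k+1}/q_{k+1})_\pm$ and has length $1/(q_kq_{k+1})$, so for $k$ large $q_kq_{k+1}\leq 2^{n_{k+1}(1-\epsilon/\log 2)+O(1)}$ with $n_{k+1}:=\sum_{i=1}^{k+1}a_i$. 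Combined with the trivial $a_{k+1}\leq n_{k+1}$ and identity (B) from the proof of Proposition \ref{lem:The-distribution-function}, one obtains
\[
a_{k+1}\cdot\frac{\nu_F\bigl([p_k/q_k,p_{k+1}/q_{k+1})_\pm\bigr)}{\lambda\bigl([p_k/q_k,p_{k+1}/q_{k+1})_\pm\bigr)}=\frac{2a_{k+1}q_kq_{k+1}}{2^{n_{k+1}}}\longrightarrow 0.
\]
Proposition \ref{proposition}(ii) then delivers $Q'(x)=0$, so $x\in\Lambda_0$ and in particular $x\notin\Lambda_\sim$. The whole weight of the argument rests on this last step, which critically exploits the sharpness of Proposition \ref{proposition}(ii); the remaining pieces are clean assemblies of prior results.
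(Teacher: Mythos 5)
Your proposal is correct and follows essentially the same route as the paper: upper bound on $\Lambda_\infty$ via $\Lambda_\infty\subset\mathcal{L}_*(h_{\mathrm{top}})$ and Proposition \ref{Lemma6}(i), lower bound via Proposition \ref{Lemma5} and continuity of $d$; lower bound on $\Lambda_\sim$ via $\mathcal{L}(s_0,s_1)\subset\Lambda_\sim$ and Proposition \ref{Lemma6}(ii); and upper bound on $\Lambda_\sim$ by the contrapositive of Proposition \ref{proposition}(ii) to get $\Lambda_\sim\subset\mathcal{L}^*(h_{\mathrm{top}})$. The only cosmetic difference is that you invoke Remark \ref{rem5.5} directly rather than Corollary \ref{Corollary6}(iii) for the inclusion $\mathcal{L}(s_0,s_1)\subset\Lambda_\sim$, and you spell out the $\epsilon$-estimate $a_{k+1}q_kq_{k+1}/2^{\sum a_i}\to 0$ that the paper leaves implicit.
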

\begin{rem}
By combining Theorem \ref{Theorem}, Proposition \ref{Lemma:Kinney}
and Remark \ref{remarkK}, and using the fact that $h_{\mathrm{top}}<\chi_{\nu_{F}}\approx0.792$,
one immediately finds that the actual value of $\dim_{H}\left(\mathcal{L}(h_{\mathrm{top}})\right)$
is trapped between $1$ and the Hausdorff dimension of the measure
of maximal entropy of the Farey map (cf. Figure \ref{fig:Spectrum}). That is, we have \[
0.875\approx\dim_{H}(\mathcal{L}(\chi_{\nu_{F}}))=\dim_{H}(\nu_{F})<\dim_{H}
\left(\mathcal{L}(h_{\mathrm{top}})\right)<\dim_{H}\left(\mathcal{L}(0)\right)=1.\]
\end{rem}
\begin{proof}
For the proof of the second equality in the theorem, it is sufficient
to show that \begin{eqnarray}
\mathcal{L}(h_{\mathrm{top}}+\kappa)\subset\Lambda_{\infty}\subset\mathcal{L}_*(h_{\mathrm{top}}),\,\,
\textrm{ for each }\,\,\kappa>0.\label{inc}\end{eqnarray}
The first inclusion is just the first statement in Proposition \ref{Lemma5}. For the second inclusion in (\ref{inc}), let $x\in\Lambda_{\infty}$ be
given. We then have $\lim_{n\rightarrow\infty}2^{n}\,\lambda\left(T_{n}(x)\right)=0$,
which gives that for each $\epsilon>0$ there exists $N_{\epsilon}\in\N$
such that $2^{n}\,\lambda\left(T_{n}(x)\right)<\epsilon$, for all
$n\geq N_{\epsilon}$. Now note that we have the following chain of
implications. \begin{eqnarray*}
2^{n}\,\lambda\left(T_{n}(x)\right)<\epsilon & \Longrightarrow & \lambda\left(T_{n}(x)\right)<\epsilon\,2^{-n}
\Longrightarrow\,\log\lambda\left(T_{n}(x)\right)<-nh_{\mathrm{top}}+\log\epsilon\\
 & \Longrightarrow & \ell_{n}(x)>h_{\mathrm{top}}-\log\epsilon/n.\end{eqnarray*}
 It follows that $\liminf_{n\rightarrow\infty}\frac{S_n I(x)}{S_n N(x)}\geq\liminf_{n\rightarrow\infty}\ell_{n}(x)\geq h_{\mathrm{top}}$.
This shows that $x\in\mathcal{L}_*(h_{\mathrm{top}})$, and hence,
$\Lambda_{\infty}\subset\mathcal{L}_*(h_{\mathrm{top}})$. This finishes
the proof of the second inclusion in (\ref{inc}), and hence finishes
the proof
of the second equality stated in the theorem.\\
For the remaining assertions of the theorem, first note that by 
Lemma \ref{Lemma6} we have 
$\dim_{H}(\mathcal{L}(h_{\mathrm{top}}))=
\dim_{H}(\mathcal{L}^{*}(h_{\mathrm{top}}))$.
 Hence, for the upper bound, it is sufficient to show that $\Lambda_{\sim}\subset\mathcal{L}^{*}(h_{\mathrm{top}})$.
In order to prove this, note that we have that  $\Lambda_{\sim}\subset \mathcal{U}\setminus\Lambda_0$.
By the second part of Proposition \ref{proposition} we have 
\[x\in\mathcal{U}\setminus\Lambda_0 \implies \limsup_{n \to \infty} \frac{a_nq_nq_{n-1}}{2^{\sum_{j=1}^n a_j}}>0
 \implies \limsup_{n \to \infty} \frac{S_nI(x)}{S_nN(x)}\geq h_\mathrm{top},
\] 
and hence $x\in\mathcal{L}^{*}(h_{\mathrm{top}})$.
This finishes the proof of the upper bound $\dim_{H}(\Lambda_{\sim})\leq\dim_{H}\left(\mathcal{L}(h_{\mathrm{top}})\right)$.\\
 For the lower bound, note that by Corollary \ref{Corollary6} we have
 that \[
\left\{ x\in\mathcal{U}:\liminf_{n \to \infty}
\frac{S_nI(x)}{S_nN(x)}<h_{\mathrm{top}}<\limsup_{n \to \infty}
\frac{S_nI(x)}{S_nN(x)}\right\} \subset\Lambda_{\sim}.\]
 Hence, it is sufficient to show that
 $\dim_{H}\left(\mathcal{L}\left(s_{0},s_{1}\right)\right)\geq
 \dim_{H}\left(\mathcal{L}\left(s_{1}\right)\right)$,
for  each $s_{0}\in\left(0,h_{\mathrm{top}}
 \right)$ and $s_{1}\in\left(h_{\mathrm{top}},\infty\right)$.
 Since the latter is an immediate consequence of Proposition \ref{Lemma6},
 the proof of the theorem is complete.
\end{proof}
Let us finish the paper with the following
immediate consequence of Theorem \ref{Theorem}.
\begin{cor}
For the Hausdorff dimension of $\mathcal{U}\setminus\Lambda_{0}$
we have \[
\dim_{H}(\nu_{F})<\dim_{H}(\mathcal{U}\setminus\Lambda_{0})=\dim_{H}(\mathcal{L}(h_{\mathrm{top}}))<1.\]
 In particular, this implies the aforementioned result of
Salem \cite{Salem}, namely that $Q$ is a singular function in the sense that \[
\lambda\left(\Lambda_{0}\right)=1.\]
\end{cor}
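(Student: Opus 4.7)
The corollary follows almost directly by combining Theorem \ref{Theorem} with the strict monotonicity of the dimension function $d$ from Theorem \ref{KS}, so the plan is essentially bookkeeping rather than new analysis.

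First, I would observe that since $\mathcal{U}=\Lambda_{0}\cup\Lambda_{\infty}\cup\Lambda_{\sim}$ is a disjoint decomposition, we have $\mathcal{U}\setminus\Lambda_{0}=\Lambda_{\infty}\cup\Lambda_{\sim}$. Using the countable stability of Hausdorff dimension together with Theorem \ref{Theorem}, this gives
\[
\dim_{H}(\mathcal{U}\setminus\Lambda_{0})=\max\{\dim_{H}(\Lambda_{\infty}),\dim_{H}(\Lambda_{\sim})\}=\dim_{H}(\mathcal{L}(h_{\mathrm{top}})),
\]
which establishes the central equality.

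Next I would address the two strict inequalities. For the lower bound, recall from (\ref{R}) that $\dim_{H}(\nu_{F})=\dim_{H}(\mathcal{L}(\chi_{\nu_{F}}))=d(\chi_{\nu_{F}})$, and from Remark \ref{remarkK} that $\chi_{\nu_{F}}\approx 0.792>\log 2=h_{\mathrm{top}}$. Since Theorem \ref{KS} asserts that $d$ is strictly decreasing on $[0,2\log\gamma]$, it follows that $d(h_{\mathrm{top}})>d(\chi_{\nu_{F}})$, giving $\dim_{H}(\nu_{F})<\dim_{H}(\mathcal{L}(h_{\mathrm{top}}))$. For the upper bound, the same monotonicity together with $d(0)=1$ yields $\dim_{H}(\mathcal{L}(h_{\mathrm{top}}))=d(h_{\mathrm{top}})<d(0)=1$.

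Finally, for Salem's singularity statement, I would note that $\dim_{H}(\mathcal{U}\setminus\Lambda_{0})<1$ implies that $\mathcal{U}\setminus\Lambda_{0}$ has vanishing one-dimensional Hausdorff measure, and hence $\lambda(\mathcal{U}\setminus\Lambda_{0})=0$, i.e.\ $\lambda(\Lambda_{0})=1$. There is no real obstacle here; the entire corollary is a packaging of Theorem \ref{Theorem} with the analytic properties of the spectrum $d$, and the only substantive input is the numerical comparison $h_{\mathrm{top}}<\chi_{\nu_{F}}$, which was already recorded in Remark \ref{remarkK}.
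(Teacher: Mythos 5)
Your proposal is correct and follows essentially the same route as the paper's remark preceding the corollary: the equality comes from Theorem \ref{Theorem} together with (finite) stability of Hausdorff dimension under unions, and the two strict inequalities come from the strict monotonicity of $d$ on $[0,2\log\gamma]$ together with $\chi_{\nu_F}>h_{\mathrm{top}}$ and $d(0)=1$, exactly as recorded in Remark \ref{remarkK}; the deduction of $\lambda(\Lambda_0)=1$ from $\dim_H(\mathcal{U}\setminus\Lambda_0)<1$ is standard.
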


\end{document}